\theoremstyle{plain}
\newtheorem{theorem}{Theorem}
\newtheorem{corollary}{Corollary}
\newtheorem{lemma}{Lemma}
\theoremstyle{definition}
\newtheorem{definition}{Definition}
\theoremstyle{claim}
\newcommand*\diff{\mathop{}\!\mathrm{d}}
\newcommand*\E{\mathbb{E}}
\let\al=\alpha
\let\ve=\epsilon
\title{Concentration inequalities for sums of random variables, each having power bounded tails}
\author{Oleksii Omelchenko}
\author{Andrei A. Bulatov \thanks{This work was supported by an NSERC Discovery grant.}}
\affil{School of Computing Science, Simon Fraser University, Canada}
\affil[]{ \textit{\{oomelche, abulatov\}}@sfu.ca } 
\begin{document}

\date{}
\maketitle

\begin{abstract}
 
 In this work we present concentration inequalities for the sum $S_n$ of independent integer-valued not necessary indentically distributed random variables, where each variable has tail function that can be bounded by some power function with exponent $-\alpha$. We show that when $0<\alpha\leq 1$, then the sum does not have finite expectation, however, with high probability we have that $|S_n|=O\left(n^{1/\alpha}\right)$. When $\alpha>1$, then the sum $S_n$ is concentrated around its mean. 
 
 Since the r.vs. that constitute the sum has tails, which can be bounded by some power function, it follows that results of this paper are applicable to a wide range of different distributions, including the exponentially decaying ones. 
\end{abstract}

\section{Introduction}

Random combinatorial structures and related computational 
problems have been studied for decades. The majority of
research have been focused on structures constructed 
or selected according to some sort of uniform distribution, as this
is usually the most natural approach, and also most of the techniques
work best in this case. However, many applied and real world problems
are best modelled with non-uniform distributions 
\cite{Ansotegui09:structure,Ansotegui09:towards,Broder00:graph,%
	Kumar00:random,Newman05:power,Clauset09:power,Barabasi99:typical}. 
So, in recent years there have been
an increasing amount of work on structures sampled from less 
standard distributions, including heavy-tailed ones, such as the 
power law distribution \cite{Friedrich17:bounds,Cooper14:typical}, or 
somewhat arbitrary distributions 
\cite{Cooper07:random,Molloy95:critical,Molloy98:size}. 

One of the main difficulties in studying distributions similar to the 
power law is that many standard tools are not available for such 
distributions. Concentration bounds like Chernoff Bound or Azuma 
inequality may not apply because heavy-tailed distribution lacks 
higher moments, or even the second or the first moment. More 
sophisticated approaches like Fourier analysis may be lost as well, 
as the Friedgut's satisfiability threshold \cite{Friedgut99:sharp} 
demonstrates, that it is notoriously difficult to generalize beyond 
near-uniform distributions.

Heavy tail distributions have been studied in probability theory for
decades \cite{BrysonMauriceC.1974HDPa}. In particular, some (though not very strong) 
concentration bounds can be found in \cite{Borovkov08:asymptotic}. 
It is therefore 
somewhat surprizing that such bounds are (to our best knowledge) 
not used in the computer science literature, instead substituted by 
ad hoc methods or some results working in special cases 
\cite{Hopcroft14:data}. 
Apart from relative obscurity of these results from probability theory, 
a reason for that may be that the existing bounds tend to be 
proved in a very general setting, which, although being very 
powerful, often applies to continuous random variables, or does not
give the kind of bounds needed in combinatorics.

In this paper we consider concentration bounds for sums of 
random variables, possibly with heavy tails. The paper is mostly 
based on the results of \cite{Borovkov08:asymptotic} where such
issues have been thoroughly studied. While we are not 
claiming any significant new results, our goal is to make these results
easier to use for combinatorial applications such as the Random 
Satisfiability problem. By slightly restricting the generality of the 
framework we considerably simplify and `discretize' the proofs.
At the same time we improve the bounds in the inequalities.

More precisely, we consider distributions $X$ with integer values,
whose tail functions $F_{X+}$ (the right tail) and $F_{X-}$
(the left tail) are majorized by power functions from 
$Vx^{-\al_r}$ and $Wx^{-\al_\ell}$, respectively. Note that such
a distribution may have no first moment if $\min(\al_r,\al_\ell) \leq 1$,
and it may have no second moment if $\min(\al_r,\al_\ell) \leq 2$.

We show that if $\min(\al_r,\al_\ell) \leq 1$, then the sum of such 
variables w.h.p.\ does not deviates much from the value one 
may expect (the mean value of such sum does not exists). 
Note that we do not assume that these variables are 
identically distributed.

\begin{theorem}\label{the:<1}
	Let $S_n=\sum_{i=1}^nX_i$, where for each $X_i$ it holds
	$F_{X_i+}(x)\le Vx^{-\al_r}$, for some $0<\al_r \leq 1$. Then for any
	$\ve>0$,
	\[
	\Pr\left[S_n\ge n^{\frac1{\al_r}+\ve}\right]\le(V+e^{2V})n^{-\al_r\ve},
	\]
	when $n\to\infty$.
\end{theorem}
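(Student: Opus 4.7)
I would reduce to the nonnegative-part sum $\sum_i X_i^+$, truncate at level $N:=n^{1/\al_r+\ve}$, and split the deviation event $\{S_n\ge N\}$ into the big-jump piece $\{\exists i:X_i^+>N\}$ and the truncated-sum piece $\{\sum_i Y_i\ge N\}$, where $Y_i:=X_i^+\one[X_i^+\le N]$. Since $S_n\le\sum_i X_i^+$, this decomposition is valid. The big-jump piece is immediate from the tail hypothesis and the union bound:
\[
\Pr[\exists i:X_i^+>N] \;\le\; n\cdot V N^{-\al_r} \;=\; V n^{-\al_r\ve},
\]
giving the $V$ summand of the target bound.

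For the truncated-sum piece the plan is to apply exponential Markov with $\lambda:=\al_r\ve(\ln n)/N$, chosen precisely so that $e^{-\lambda N}=n^{-\al_r\ve}$ by design. Writing $P_k:=\Pr[X_i^+\ge k]$ and applying Abel's summation to $\E[e^{\lambda Y_i}]-1=\sum_{k=1}^N(e^{\lambda k}-1)\Pr[X_i^+=k]$, one obtains
\[
\E[e^{\lambda Y_i}]-1 \;\le\; (e^\lambda-1)\sum_{k=1}^{N}e^{\lambda(k-1)}P_k \;\le\; 2\lambda V\sum_{k=1}^{N}e^{\lambda k}k^{-\al_r},
\]
where the last step uses $P_k\le Vk^{-\al_r}$ and $e^\lambda-1\le 2\lambda$ (valid as $\lambda\to 0$, which holds for all large $n$). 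The crucial estimate is that, with $L:=\lambda N=\al_r\ve\ln n\to\infty$, the exponentially-weighted sum concentrates on a window of width $1/\lambda$ around $k=N$: for $k\le N/2$ the factor $e^{\lambda k}\le e^{L/2}$ is exponentially smaller than the $k\approx N$ contribution, while for $k\in[N/2,N]$ one has $k^{-\al_r}\asymp N^{-\al_r}$ and the geometric tail $\sum e^{\lambda k}$ is of order $e^L/\lambda$. Thus $\sum_k e^{\lambda k}k^{-\al_r}=O(e^L/(\lambda N^{\al_r}))$, and
\[
n\cdot\E[e^{\lambda Y_i}-1] \;=\; O\bigl(V\,n\,e^L N^{-\al_r}\bigr) \;=\; O(V),
\]
because $nN^{-\al_r}=n^{-\al_r\ve}$ and $e^L=n^{\al_r\ve}$ cancel exactly by the choice of $\lambda$. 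Tracking the constants so the implicit bound is $\le 2V$ gives $\prod_i\E[e^{\lambda Y_i}]\le e^{2V}$, and therefore $\Pr[\sum_i Y_i\ge N]\le e^{2V}n^{-\al_r\ve}$. Adding the two contributions yields the stated $(V+e^{2V})n^{-\al_r\ve}$.

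\textbf{Main obstacle.} The delicate step is the MGF estimate. A naive Markov bound applied directly to $\sum Y_i$ produces a prefactor $V/(1-\al_r)$ in front of $n^{-\al_r\ve}$, which fails to be uniform in $\al_r\in(0,1]$ and diverges as $\al_r\to 1$ (and is only $V(1+\ln n)$ at $\al_r=1$). The remedy is exactly the tuning of $\lambda$: large enough that the exponential weight $e^{\lambda k}$ drags the mass of $\sum_k e^{\lambda k}k^{-\al_r}$ into a neighborhood of $k=N$ on a scale of $1/\lambda$, where the singular behaviour of $k^{-\al_r}$ is replaced by the harmless value $N^{-\al_r}$. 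This self-regularisation converts the $(1-\al_r)^{-1}$ factor into the absolute constant $2$ sitting in the exponent of $e^{2V}$, and is what makes the final bound uniform in $\al_r\in(0,1]$.
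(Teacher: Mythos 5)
Your proposal follows essentially the same route as the paper's proof of Theorem~\ref{thr:bound_1}: decompose into a union-bounded big-jump piece giving $Vn^{-\alpha_r\epsilon}$, then apply exponential Markov to the truncated sum with the same tuning $\lambda=\alpha_r\epsilon(\ln n)/N$ (which equals the paper's $\mu=x^{-1}\ln(x^{\alpha_r}/n)$), and finally use summation by parts plus the observation that $\sum_{k\le N}e^{\lambda k}k^{-\alpha_r}$ is dominated by $k$ near $N$. The only cosmetic differences are that you truncate the positive part $X_i^+$ directly while the paper conditions on $\{X_i\le x\}$, and you split the weighted sum at $N/2$ while the paper splits at $2\alpha_r/\mu$; in both treatments the precise prefactor $e^{2V}$ is somewhat optimistic and would need more careful constant-tracking (the paper's own $I_{i,3}$ computation in fact yields $4V$ rather than $V$), but the polynomial rate $n^{-\alpha_r\epsilon}$ is obtained identically.
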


In the case $\min(\al_r,\al_\ell)>1$, the mean of the sum exists
and prove a bound on the probability the deviates  from it
by a certain amount.

\begin{theorem}\label{the:>1}
	Let $S_n=\sum_{i=1}^nX_i$, where for each $X_i$ it holds
	$F_{X_i+}(x)\le Vx^{-\al_r}$, $F_{X_i-}(x)\le Wx^{-\al_\ell}$, 
	for some $\al_r,\al_\ell>1$. Then letting $\al=\min(\al_r,\al_\ell)$, 
	for any $\ve>0$,
	\[
	\Pr\left[S_n-\E S_n\ge n^{\max(1/\al,1/2)+\ve}\right]\le Vn^{1-\max(1,\al/2)-\al\ve}+e^{2V}n^{-\al\ve},
	\]
	when $n\to\infty$.
\end{theorem}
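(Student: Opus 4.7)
I plan to adapt the truncation-and-exponential-Markov recipe implicit in Theorem~\ref{the:<1} to the case where the mean exists. Fix the target scale $y := n^{\max(1/\alpha,1/2)+\epsilon}$ and truncate from above, $Y_i := X_i\one\{X_i\le y\}$, so that $Y_i\le X_i$ pointwise and, on the event $\Omega_0:=\{\max_i X_i\le y\}$, $S_n=\sum_i Y_i$. A union bound disposes of the complement:
\[
\Pr[\Omega_0^c]\le\sum_i F_{X_i+}(y)\le nVy^{-\alpha_r}\le Vn^{1-\max(1,\alpha/2)-\alpha\epsilon},
\]
which is the first term of the claimed bound (using $\alpha_r\ge\alpha$ so that $\alpha_r\max(1/\alpha,1/2)\ge\max(1,\alpha/2)$). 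Only the upper tail is truncated, which suffices for an upper deviation.

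On $\Omega_0$ I have to bound $\Pr[\sum_iY_i-\E S_n\ge y]$. Because $\E Y_i\le\E X_i$, this is at most $\Pr[\sum_i(Y_i-\E Y_i)\ge y]$, to which I apply the exponential Markov inequality with parameter $\lambda>0$:
\[
\Pr\Bigl[\sum_i(Y_i-\E Y_i)\ge y\Bigr]\le e^{-\lambda y}\prod_i\E e^{\lambda(Y_i-\E Y_i)}.
\]
I will choose $\lambda$ so that $\lambda y=\alpha\epsilon\log n+2V$; then the Markov prefactor equals $e^{-2V}n^{-\alpha\epsilon}$. The task then reduces to showing $\prod_i\E e^{\lambda(Y_i-\E Y_i)}\le e^{4V}$, which combined with the prefactor yields the second term $e^{2V}n^{-\alpha\epsilon}$.

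The MGF bound is the technical heart. Starting from $\log\E e^{\lambda(Y_i-\E Y_i)}\le\E(e^{\lambda Y_i}-1-\lambda Y_i)$ and integrating by parts,
\[
\E(e^{\lambda Y_i}-1-\lambda Y_i)=\lambda\!\int_0^y(e^{\lambda t}-1)\Pr[X_i>t]\,dt+\lambda\!\int_0^\infty(1-e^{-\lambda s})\Pr[X_i<-s]\,ds,
\]
I substitute $\Pr[X_i>t]\le Vt^{-\alpha_r}$ and change variables $u=\lambda t$; the resulting integral $V\lambda^{\alpha_r}\int_0^{\lambda y}(e^u-1)u^{-\alpha_r}du$ is dominated by the upper endpoint, giving $\sim Ve^{\lambda y}y^{-\alpha_r}$ per term. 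Summed over $i$, the key cancellation $nVy^{-\alpha_r}e^{\lambda y}\le Ve^{2V}$ (since $\alpha_r\ge\alpha$ forces $nVy^{-\alpha_r}\le Vn^{-\alpha\epsilon}$ and $e^{\lambda y}=e^{2V}n^{\alpha\epsilon}$) makes the positive contribution a constant of order $V$. The left-tail integral, using $\Pr[X_i<-s]\le Ws^{-\alpha_\ell}$ and the elementary bound $1-e^{-\lambda s}\le\min(1,\lambda s)$, is of order $W\lambda^{\min(2,\alpha_\ell)}$ per term and sums to $o(1)$ for $\alpha_\ell>1$. Together these give $\sum_i\log\E e^{\lambda(Y_i-\E Y_i)}\le 4V$ for $n$ large.

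The main obstacle is precisely this last computation: because the deviation target $y$ is placed at the scale $n^{\max(1/\alpha,1/2)+\epsilon}$, the choice $\lambda y=\Theta(\log n)$ lies well outside the Bennett/Bernstein regime (which would require $\lambda y=O(1)$), so one cannot use any off-the-shelf concentration inequality as a black box. The delicate cancellation of the $n$-dependence between $nVy^{-\alpha_r}=Vn^{-\alpha\epsilon+o(1)}$ and the factor $e^{\lambda y}=n^{\alpha\epsilon}e^{2V}$ must be done by hand using the explicit polynomial tail, and the two regimes in the exponent $\max(1/\alpha,1/2)$ correspond exactly to the two ways in which this cancellation balances: a heavy-tail scale $y=n^{1/\alpha+\epsilon}$ when $\alpha\le 2$ (where the truncated variance is $O(y^{2-\alpha})$) and a Gaussian scale $y=n^{1/2+\epsilon}$ when $\alpha>2$ (where the variance is already $O(1)$).
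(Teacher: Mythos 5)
Your strategy mirrors the paper's: truncate from above, bound the complement by a union bound, and handle the truncated sum via exponential Markov with a tuned parameter (the paper's $\mu,x$ are your $\lambda,y$). The execution is somewhat cleaner than the paper's in three respects: you use the indicator truncation $Y_i=X_i\one\{X_i\le y\}$ where the paper conditions on $\{X_i\le y\}$ and works with the conditional law; you use the single Bennett-type inequality $\log\E e^{\lambda(Y_i-\E Y_i)}\le\E(e^{\lambda Y_i}-1-\lambda Y_i)$ to take care of centering, whereas the paper carries the factor $e^{-\mu\E X_i}$ explicitly and must extract $\mu\E X_i$ from its estimates of the first two ranges $J_{i,1},J_{i,2}$; and you replace the paper's discrete summation-by-parts into three intervals $J_{i,1},J_{i,2},J_{i,3}$ with a single continuous integration by parts. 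These are stylistic differences, not a genuinely different route.

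There is, however, a real gap in your positive-tail estimate. The integral $V\lambda^{\alpha_r}\int_0^{\lambda y}(e^u-1)u^{-\alpha_r}\,du$ diverges at the lower limit whenever $\alpha_r\ge 2$, since $(e^u-1)u^{-\alpha_r}\sim u^{1-\alpha_r}$ near $0$; this reflects the fact that the bound $\Pr[X_i>t]\le Vt^{-\alpha_r}$ is vacuous (exceeds $1$) for small $t$. You must split the positive-tail integral at, say, $t=1$: for $t\in(0,1]$ use $\Pr[X_i>t]\le 1$, giving $\lambda\int_0^1(e^{\lambda t}-1)\,dt=e^\lambda-1-\lambda=O(\lambda^2)$, which after multiplying by $n$ is $o(1)$ because $n\lambda^2=O\left(n^{1-2\max(1/\alpha,1/2)-2\epsilon}\log^2 n\right)=O(n^{-2\epsilon}\log^2 n)$; only for $t\ge1$ apply the power-law bound, so the lower limit of the transformed integral becomes $\lambda$ rather than $0$, and the extra contribution is $O(\lambda^{\min(2,\alpha_r)})$ (with a log factor when $\alpha_r=2$) --- exactly the paper's error term $\mathcal{T}(\alpha,\mu)$, which the paper produces by treating the ranges $J_{i,1},J_{i,2}$ separately. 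A second, smaller issue is the constant: with $\lambda y=\alpha\epsilon\log n+2V$, the correct bound is $\sum_i\log\E e^{\lambda(Y_i-\E Y_i)}\le Ve^{2V}+o(1)$, which is not $\le 4V$ unless $V\le\ln 2$. Choosing instead $\lambda y=\alpha\epsilon\log n$ (the paper's $\mu x=\ln(x^\alpha/n)$ in the case $\alpha\le 2$) gives $ne^{\lambda y}y^{-\alpha_r}\le 1$ and hence $\sum_i\log\E e^{\lambda(Y_i-\E Y_i)}\le V+o(1)$, and the Markov prefactor is $n^{-\alpha\epsilon}$, yielding the stated second term $e^{2V}n^{-\alpha\epsilon}$ for large $n$.
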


Bounds for the left tail are similar.

The methods we use are fairly standard and boil down to careful 
evaluation of the tails of the sum of the $X_i$'s.

\section{Notation and preliminaries}

We say that some sequence of events $\left\{A_n\right\}$ happens with high probability (\textbf{w.h.p.}), when 
$$
\lim\limits_{n \rightarrow \infty}\Pr[A_n] = 1.
$$

Let $S_n=\sum_{i=1}^{n}X_i$ be the sum of $n$ independent \textit{not necessary} identically distributed integer-valued random variables $X_i$'s with tails that do not depend on $n$ (probably this constraint could be relaxed to some extent, however, in this work we will deal only with variables, which constitute the sum, that do not depend on $n$).

First, we need to introduce a couple of useful concepts that we will exploit heavily in the subsequent chapters. Primarily, we will need \textit{right-} and \textit{left-tail functions}:
\begin{definition}
Let $X$ be some random variable with support on $\mathbb{S} \subseteq \mathbb{R}$. Then the function
$$
F_{X+}(x):=\Pr[X \geq x], \text{ where } x > 0 
$$
is the right-tail function of the r.v. $X$. Similarly, the left-tail function of the r.v. $X$ is
$$
F_{X-}(x):=\Pr[X \leq -x], \text{ where } x > 0.
$$
\end{definition}

Note, though, that the above definition is slightly broader than we need, since it applies to any numerical variable. However, in this paper, we deal with random variables that take integer values, meaning their support $\mathbb{S}=\mathbb{Z}$.

We do not require variables $X_i$'s to be identically distributed, but we do need some property that they all share in order to work with different distributions in a simple and unified way. For that purpose we harness the concept of \textit{majorization} (or domination):
\begin{definition}
We say that a function $g(x)$ majorizes a function $f(x)$, if $g(x) \geq f(x)$ for every $x$ from the domain of $f$. 
\end{definition}

We will focus on variables $X_i$'s with tails that can be majorized by some power functions with negative exponents. As it will be shown later, the values of these exponents play a critical role in the behaviour of the sum $S_n$.   

\begin{definition}
	\label{def:D}
	Random variable $X$  has a probability distribution function that belongs to the set $\mathbb{D}\left(\alpha_l,\alpha_r\right)$, if there exist constants $V>0$ and $W>0$ (which we call left- and right-tail constants of the respective r.v.), such that $F_{X+}(x) \leq V x^{-\alpha_r}$ and $F_{X-}(x) \leq W x^{-\alpha_l}$ for $x > 0$, where $\alpha_l,\alpha_r>0$ are constants (we will call them left and right tail exponents or powers).
	
	The fact that the distribution of a r.v. $X$ is from $\mathbb{D}\left(\alpha_l,\alpha_r\right)$ will be denoted as
	$$
	X_i \sim \mathbb{D}\left(\alpha_l,\alpha_r\right).
	$$
\end{definition}

Although, it may seem that the variables from $\mathbb{D}\left(\alpha_l,\alpha_r\right)$ belong to a rather restrictive class of random variables, however, such majorization can be applied to a very broad range of variables, including gaussian, subexponential, heavy-tailed, and, obviously, power-law random variables. Therefore, all the results of this paper are applicable to these classes of random variables.

We will also say that the variable's distribution is from $\mathbb{D}(\cdot\,,\alpha_r)$ (or $\mathbb{D}(\alpha_l, \, \cdot)$), if the right tail is majorized by some power function with exponent $-\alpha_r$, while the left tail is arbitrary (or, respectively, when the left tail can be majorized by some power function with exponent $-\alpha_l$, while the right tail is arbitrary). Moreover, by $\mathbb{D}(>1, \cdot)$ (or $\mathbb{D}(\cdot, >1)$) we denote the set of distributions with left-tail (or right-tail) functions that can be bounded by some power function $C\,x^{-\alpha}$, where $\alpha>1$. 

Additionally, when finite sequence $\{X_i\}_{i=1}^{n}$ of random variables consists of r.vs., each of which belongs to the class $\mathbb{D}(\, \cdot \,, \, \alpha_{r,i})$, then we use two quantities: 
$$
\alpha_r = \min\left( \alpha_{r,1},\,\alpha_{r,2}, \cdots, \, \alpha_{r,n} \right),
$$
and
$$
V = \max\left( V_1,\,V_2, \cdots, \, V_n \right),
$$
where $V_i$'s are the constants from the majorizing power functions.
Clearly, that each $X_i$'s right tail then can be majorized by a power function  $V\,x^{-\alpha_r}$. In a similar way we define $\alpha_l$ and $W$, that is when all $X_i \sim \mathbb{D}(\alpha_{l,i}, \cdot\,)$, then 
$$
\alpha_l = \min\left( \alpha_{l,1},\,\alpha_{l,2}, \cdots, \, \alpha_{l,n} \right),
$$ 
and
$$
W = \max\left( W_1,\,W_2, \cdots, \, W_n \right).
$$

Additionally, when all $X_i$'s come from the $\mathbb{D}(\alpha_{l,i}, \alpha_{r,i})$ classes , then we use another important quantity $\alpha$, which is the minimum among all $\alpha_{l,i}$'s and $\alpha_{r,i}$'s, i.e.
$$
\alpha = \min\left(\alpha_{l}, \alpha_r\right).
$$

In this paper we show how harnessing the values of tail exponents of variables $X_i$'s allows us to bound probabilities for the sum $S_n=\sum X_i$ to have large values or to deviate much from its expected value (given that its expectation exists). 

We finish the preliminaries part with a simple, yet useful technique, which we use heavily in this paper, that is summation by parts. Although, it is a well-known procedure, however, for the sake of proof completeness, we state it here.

\begin{lemma}[Summation by parts]
\label{lemma:summation-by-parts}	
Let $X$ be some integer-valued random variable. Then for any function $f(x)\geq 0$ the following equalities hold
$$
\sum_{d \leq 0}f(d)\Pr\left[X=d\right] = f(0)\Pr\left[X \leq 0\right]  + \sum_{d \leq -1}\Big(f(d)-f(d+1)\Big)\Pr\left[X_i \leq k\right],
$$
and for any $0 \leq a \leq b < \infty$
$$
\sum_{d = a}^{b}f(d)\Pr\left[X=d\right] = f(a)F_{X+}(a)-f(b)F_{X+}(b)+ \sum_{d =a+1}^{b}\Big(f(d)-f(d-1)\Big) \Pr\left[ X_i \geq k \right].
$$
\end{lemma}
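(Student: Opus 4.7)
The plan is a direct Abel (``summation by parts'') argument, which is the discrete analogue of integration by parts. For the second identity I would first write the point mass as a difference of right tails: since $X$ is integer valued, $\Pr[X = d] = F_{X+}(d) - F_{X+}(d+1)$. Substituting this into $\sum_{d=a}^{b} f(d)\Pr[X=d]$ splits it into
\[
\sum_{d=a}^{b} f(d)\, F_{X+}(d) \;-\; \sum_{d=a}^{b} f(d)\, F_{X+}(d+1).
\]
Reindexing the second sum by $d \mapsto d-1$ converts it into $\sum_{d=a+1}^{b+1} f(d-1)\, F_{X+}(d)$. Merging the two over the overlapping range $d = a+1,\ldots,b$ into a single sum with coefficient $f(d) - f(d-1)$, and pulling out the two leftover boundary contributions at $d=a$ and $d=b+1$, produces the claimed identity (with the right endpoint coming from $F_{X+}(b+1)$, which matches the statement up to a one-step index convention).

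For the first identity, the same telescoping works from the other side. I would use $\Pr[X = d] = \Pr[X \leq d] - \Pr[X \leq d-1]$ for $d \leq -1$. After peeling off the $d=0$ term, which contributes the $f(0)\Pr[X \leq 0]$ summand on the right-hand side, the very same index shift applied to $\sum_{d \leq -1} f(d)\Pr[X=d]$ yields $\sum_{d \leq -1}\bigl(f(d) - f(d+1)\bigr)\Pr[X \leq d]$, giving the stated expression.

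The only genuine subtlety, arising only in the first identity because the range extends to $-\infty$, is that one must check that the boundary term $f(d)\Pr[X \leq d]$ vanishes as $d \to -\infty$; this is implicit in the assumption that the sums are well defined, and in every subsequent application of the lemma in this paper it holds automatically because $f$ grows at most polynomially while $F_{X-}(|d|)$ decays like $W|d|^{-\alpha_\ell}$. Beyond this point the argument is pure bookkeeping: tracking shifted indices and matching the correct cumulative functions on each side. I do not expect any real obstacle.
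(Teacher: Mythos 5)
Your approach is correct and is essentially identical to the paper's: telescope the point mass as a difference of cumulative tails, split, reindex, and collect boundary terms; you also correctly flag that the lemma statement itself carries a typo (the boundary term should read $f(b)F_{X+}(b+1)$ rather than $f(b)F_{X+}(b)$, and the probabilities under the sums should be $\Pr[X\le d]$ and $\Pr[X\ge d]$, not $\Pr[X_i\le k]$ and $\Pr[X_i\ge k]$), which the paper's own derivation confirms. One small bookkeeping imprecision in your first-identity narration: peeling off the $d=0$ term from $\sum_{d\le 0}f(d)\Pr[X=d]$ gives $f(0)\Pr[X=0]$, not $f(0)\Pr[X\le 0]$; the remaining $f(0)\Pr[X\le -1]$ is in fact the leftover boundary term produced when you reindex $\sum_{d\le -1}f(d)\Pr[X\le d-1]$ to $\sum_{d\le -2}f(d+1)\Pr[X\le d]$, and only after recombining $f(0)\Pr[X=0]+f(0)\Pr[X\le-1]=f(0)\Pr[X\le 0]$ do you land on the stated form --- the paper avoids this by telescoping over all $d\le 0$ first and peeling $d=0$ from $\sum_{d\le 0}f(d)\Pr[X\le d]$ afterward. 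Your observation about the vanishing boundary term as $d\to-\infty$ is a valid extra care point that the paper silently assumes.
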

\begin{proof}
Equalities follow	after noticing that $\Pr\left[X = d\right] = \Pr\left[X \leq d\right] - \Pr\left[ X \leq d - 1 \right]$, when $d \leq 0$, and, similarly, $\Pr\left[X = d\right] = \Pr\left[X \geq d\right] - \Pr\left[ X \geq d + 1 \right]$, when $d \geq 0$. Then
\begin{align*}
\sum_{d \leq 0}f(d)\Pr\left[X=d\right] & = \sum_{d \leq 0}f(d)\Big(\Pr\left[X \leq d\right] - \Pr\left[ X \leq d - 1 \right]\Big)\\
& = \sum_{d \leq 0}f(d)\Pr\left[X \leq d\right] -  \sum_{d \leq 0}f(d)\Pr\left[ X \leq d - 1 \right]\\
& = \sum_{d \leq 0}f(d)\Pr\left[X \leq d\right] -  \sum_{d \leq -1}f(d+1)\Pr\left[ X \leq d \right]\\
& = f(0)\Pr\left[X \leq 0\right]  + \sum_{d \leq -1}f(d)\Pr\left[X \leq d\right] -  \sum_{d \leq -1}f(d+1)\Pr\left[ X \leq d \right]\\
& = f(0)\Pr\left[X \leq 0\right]  + \sum_{d \leq -1}\Big(f(d)-f(d+1)\Big)\Pr\left[X \leq d\right].
\end{align*}
As for the sum over positive indices, it goes likewise
\begin{align*}
\sum_{d = a}^{b}f(d)\Pr\left[X=d\right] & = \sum_{d =a}^{b}f(d)\Big(\Pr\left[X \geq d\right] - \Pr\left[ X \geq d + 1 \right]\Big)\\
& = \sum_{d =a}^{b}f(d)\Pr\left[X \geq d\right] -  \sum_{d =a}^{b}f(d)\Pr\left[ X \geq d + 1 \right]\\
& = \sum_{d =a}^{b}f(d)\Pr\left[X \geq d\right] -  \sum_{d =a+1}^{b+1}f(d-1)\Pr\left[ X \geq d \right]\\
& = f(a)\Pr\left[X \geq a\right]-f(b)\Pr\left[ X \geq b+1 \right]+ \sum_{d =a+1}^{b}\Big(f(d)-f(d-1)\Big) \Pr\left[X \geq d\right].
\end{align*}
And the lemma follows.
\end{proof}

\section{Tail bounds of $S_n$, when $0 < \min\left(\alpha_l,\alpha_r\right) \leq 1$}

Tail inequalities in this section are of the most general nature, meaning they can be applied to \textbf{any} variables with tail functions that can be majorized by some power-law function $C\,k^{-\alpha}$ with $\alpha>0$. Unfortunately, there exists a trade-off between the specificity of the inequalities over random variables and the tightness of the bounds they provide, i.e. the more general the inequalities are, the wider bounds on the values of $S_n$ they assert.  

\begin{theorem}
	\label{thr:bound_1}
	Let $S_n=\sum_{i=1}^{n}X_i$, where $X_i \sim \mathbb{D}\left(\:\cdot\:, \alpha_{r,i} \right)$ are independent not necessary identically distributed integer-valued random variables, and $0< \alpha_r \leq 1$.  Then for any $\epsilon > 0$, we have
	\begin{align}
		\Pr[S_{n} \geq n^{\frac{1}{\alpha_r}+\epsilon}] & \leq \left(V+e^{2V}\right) n^{-\alpha_r\epsilon},
	\end{align}
	when $n \rightarrow \infty$.
\end{theorem}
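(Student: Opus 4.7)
The plan is a truncation-plus-exponential-Markov argument, which is standard for sums of heavy-tailed random variables. Because $0<\alpha_r\le 1$ the $X_i$ need not have finite moment generating functions, so Chernoff cannot be applied directly to $S_n$; one first has to remove the unusually large summands.

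Setting $N:=n^{1/\alpha_r+\epsilon}$ and $Y_i:=X_i\mathbf{1}[X_i<N]$, I would split
\[
\Pr[S_n\ge N]\;\le\;\Pr\!\bigl[\max_i X_i\ge N\bigr]\;+\;\Pr\!\Bigl[\textstyle\sum_{i=1}^n Y_i\ge N\Bigr].
\]
The first term is controlled by a one-line union bound from the majorization hypothesis: $\Pr[\max_i X_i\ge N]\le nV\,N^{-\alpha_r}=V\,n^{-\alpha_r\epsilon}$, which is exactly the $V$-summand in the claimed bound.

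For the second event every $Y_i$ is bounded above by $N$, so the Chernoff inequality $\Pr[\sum Y_i\ge N]\le e^{-tN}\prod_i\E\,e^{tY_i}$ is legitimate for any $t>0$. Splitting $\E\,e^{tY_i}$ over $X_i<0$, $0\le X_i<N$, and $X_i\ge N$, and using $e^{tY_i}\le 1$ on the first and third pieces, leaves
\[
\E\,e^{tY_i}\;\le\;1+\sum_{k=1}^{N-1}(e^{tk}-1)\,\Pr[X_i=k].
\]
Applying Lemma~\ref{lemma:summation-by-parts} with $f(k)=e^{tk}-1$ (so that the boundary contribution at $k=0$ automatically vanishes) and inserting $F_{X_i+}(k)\le Vk^{-\alpha_r}$ converts this into
\[
\E\,e^{tY_i}\;\le\;1+(1-e^{-t})\,V\sum_{k=1}^{N-1}e^{tk}k^{-\alpha_r}.
\]

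The critical step is choosing $t$ so that $e^{-tN}$ decays at the target rate while the product $\prod_i\E\,e^{tY_i}$ stays bounded. I would pick $t:=\alpha_r\epsilon(\ln n)/N$, which makes $e^{-tN}=n^{-\alpha_r\epsilon}$ on the nose, and estimate the remaining sum by splitting at $k=N/2$: on the lower half the factor $k^{-\alpha_r}$ is at most $2^{\alpha_r}N^{-\alpha_r}$, producing a contribution of order $2^{\alpha_r}\,e^{tN}/(tN^{\alpha_r})$; on the upper half the exponential weight is at most $e^{tN}$, but the range has length only $N/2$ and carries an extra $e^{-tN/2}=n^{-\alpha_r\epsilon/2}$ suppression, rendering that piece lower order. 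Combined with $1-e^{-t}\le t$ and $2^{\alpha_r}\le 2$, these estimates give $\E\,e^{tY_i}\le 1+(2+o(1))V/n$, so by independence $\prod_i\E\,e^{tY_i}\le e^{2V+o(1)}$, and multiplication by $e^{-tN}$ yields $\Pr[\sum Y_i\ge N]\le e^{2V}n^{-\alpha_r\epsilon}$ as $n\to\infty$. Adding the two bounds completes the proof.

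The main obstacle is pinning the constant down to $e^{2V}$ uniformly in $\alpha_r\in(0,1]$. A crude integral bound for $\sum e^{tk}k^{-\alpha_r}$ of the form $N^{1-\alpha_r}/(1-\alpha_r)$ blows up as $\alpha_r\to 1$ and yields only a much weaker $e^{V/(1-\alpha_r)}$-type constant, so one really has to rely on the endpoint-dominated estimate described above. The boundary case $\alpha_r=1$, where the unweighted $\sum 1/k$ itself diverges logarithmically, is handled by the same split: the divergence is killed because the exponential weight $e^{tk}$ concentrates the sum near $k=N$, and the contribution there is again of order $V/n$.
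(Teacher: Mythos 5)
Your proof takes essentially the same route as the paper's: truncate the summands at $N=n^{1/\alpha_r+\epsilon}$ (you via an explicit indicator $Y_i=X_i\mathbf{1}[X_i<N]$, the paper by conditioning on the event $B=\bigcap_i\{X_i\le x\}$, which after the conditional normalizations cancel produces the same exponential moment quantity $\sum_{k\le x}e^{\mu k}\Pr[X_i=k]$), pay for the truncation with the union-bound term $Vn^{-\alpha_r\epsilon}$, apply the exponential Markov inequality with the same parameter $t=\mu=\tfrac{1}{x}\ln\tfrac{x^{\alpha_r}}{n}=\alpha_r\epsilon\ln n/N$, and then control $\sum_{k\ge1}e^{tk}\Pr[X_i\ge k]$ through summation by parts and the tail majorization $F_{X_i+}(k)\le Vk^{-\alpha_r}$. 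The one substantive deviation is the split point: you cut the sum at $N/2$ and bound $k^{-\alpha_r}\le 2^{\alpha_r}N^{-\alpha_r}$ on $[N/2,N)$, which injects an avoidable factor $2^{\alpha_r}$ into the dominant term. This gives $\prod_i\E e^{tY_i}\le e^{(2^{\alpha_r}+o(1))V}$; for $\alpha_r<1$ that is eventually below $e^{2V}$, but at the permitted endpoint $\alpha_r=1$ it is $e^{(2+o(1))V}$, which is never $\le e^{2V}$, so the advertised constant is not actually reached there. Splitting instead at any threshold that is $o(N)$ — the paper uses $M=2\alpha_r/\mu=\Theta(N/\ln n)$ — makes the dominant block carry a coefficient $(1+o(1))V$ rather than $2^{\alpha_r}V$ and closes the gap (with the extra $o(1)$ then absorbed into the spare $V$, exactly as the paper does when passing from $V+o(1)$ to $2V$). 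There is also a purely expository slip: you swapped ``lower half'' and ``upper half.'' It is on $[N/2,N)$ (the upper block) that $k^{-\alpha_r}\le 2^{\alpha_r}N^{-\alpha_r}$ holds, and it is $[1,N/2)$ (the lower block) whose contribution carries the $e^{-tN/2}=n^{-\alpha_r\epsilon/2}$ suppression and is therefore negligible.
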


	

\begin{proof}
	First, as was mentioned previously, when all $X_i \sim X_i \sim \mathbb{D}\left(\:\cdot\:, \alpha_{r,i} \right)$, then
	\begin{align}
		F_{X_i+}(k) \leq Vk^{-\alpha_r},  \label{eq:right-tail-inequality}
	\end{align}
	which is valid for all  $k \geq 1$ and any $1 \leq i \leq n$.
	
	Next, let us introduce events $B_i:=\{X_i \leq x\}$, which indicate that the variable $X_i$ does not exceed $x$, and also the event $B$ that none of the variables $X_i$'s exceed $x$, i.e.
    $$
    B := \bigcap_{i=1}^{n}B_i.
    $$
    Then we have
    \begin{align}
    \Pr\left[S_n \geq x \right] & = \Pr\left[\{ S_n \geq x \} \wedge \bar{B}\right] + \Pr\left[\{ S_n \geq x \} \wedge B\right]\nonumber \\
    & \leq \Pr\left[\bar{B}\right] + \Pr\left[\{ S_n \geq x \} \wedge B\right]\nonumber \\
    & \leq \Pr\left[\bigcup_{i=1}^{n}\bar{B}_i\right] + \Pr\left[ S_n \geq x \: | \: B\right]\, \cdot \Pr\left[B\right]. \label{eq:basic-inequality}
    \end{align}
    
    Next, 
    
    \begin{align}
    \Pr\left[\bigcup_{i=1}^{n}\bar{B}_i\right] & = \Pr\left[\bigcup_{i=1}^{n}  \{ X_i > x  \} \right] \nonumber\\
        & \leq \sum_{i=1}^{n}\Pr\left[ X_i \geq x  \right] \text{ (by Union bound)}\nonumber\\
        & = \sum_{i=1}^{n}F_{X+}(x)\nonumber\\
        & \leq \sum_{i=1}^{n}Vx^{-\alpha_r},\: \left(\text{from~\eqref{eq:right-tail-inequality}}\right)\nonumber \\
        & = nVx^{-\alpha_r} \label{eq:pr_not_b},
    \end{align}
    
    and since $X_i$'s are independent random variables, we also have
    \begin{align}
    \Pr[B] = \Pr\left[ \bigcap_{i=1}^{n}B_i \right] = \prod_{i=1}^{n}\Pr\left[ X_i \leq x \right]. \label{eq:pr_b}
    \end{align}	
   Therefore, inequality~\eqref{eq:basic-inequality} can be simplified further to
   \begin{align}
    \Pr\left[S_n \geq x \right] & \leq \Pr\left[\bigcup_{i=1}^{n}\bar{B}_i\right] + \Pr\left[ S_n \geq x \: | \: B\right]\, \cdot \Pr\left[B\right] \nonumber \\
    & \leq nVx^{-\alpha_r} + \Pr\left[ S_n \geq x \: | \: B\right]\, \cdot \prod_{i=1}^{n}\Pr\left[ X_i \leq x \right]. \label{eq:basic-inequality-2}
   \end{align}
   Now we take a closer look at the probability $\Pr[S_{n} \geq x \: | \: B]$. This sum consists of $n$ independent random variables having conditionally the same distribution as the original random variables $X_i$'s but truncated at $x$. More formally, let us introduce new truncated random variables $Y_i$ with the following probability distribution function
   $$
   \Pr[Y_i = k] = \begin{cases}
   \frac{\Pr[X_i= k]}{\Pr[X_i \leq x] }, & \text{when } k \leq x \\
   0, & \text{otherwise.}
   \end{cases}
   $$  
   Then
   $$
   \Pr\left[S_{n} \geq x \: | \: B \right] = \Pr\left[\sum_{i=1}^{n}Y_i \geq x\right] =: \Pr\left[ S_n^{\langle x \rangle} \geq x\right],
   $$
   where $S_n^{\langle x \rangle}$ is the sum of the $Y_i$'s. After introducing $Y_i$'s and $S_n^{\langle x \rangle}$, we can rewrite~\eqref{eq:basic-inequality-2} as
   \begin{align}
   \Pr\left[S_n \geq x \right] & \leq nVx^{-\alpha_r} + \Pr\left[ S_n \geq x \: | \: B\right]\, \cdot \prod_{i=1}^{n}\Pr\left[ X_i \leq x \right] \nonumber \\
   & \leq nVx^{-\alpha_r} + \Pr\left[ S_n^{\langle x \rangle} \geq x\right]\, \cdot \prod_{i=1}^{n}\Pr\left[ X_i \leq x \right] \label{eq:final-inequality-3}. 
   \end{align} 
   
   Next, from Markov's inequality we know that for any non-negative random variable $A$, positive $x$ and non-negative non-decreasing function $\phi(x)$ we have
   $$
   \Pr[A \geq x] \leq \frac{\E\,\phi(A)}{\phi(x)}.
   $$
   Applying this inequality to the $ S_n^{\langle x \rangle}$ and letting $\phi(x)=e^{\mu x}$ for some positive $\mu$, which will be defined later (in what follows, we will maintain $\mu \rightarrow 0$ when $n \rightarrow \infty$, but $\mu x \rightarrow \infty$. Moreover, we assume that $n$ is large enough for $\mu \leq 1$), we obtain
   \begin{align*}
   \Pr[S_{n}^{ \langle x \rangle} \geq  x] \leq \frac{\E e^{\mu S_{n}^{ \langle x \rangle}}}{e^{\mu x}}=e^{-\mu x}\E e^{\mu \sum_{i=1}^{n}Y_{i}}=e^{-\mu x}\E \prod_{i=1}^{n} e^{\mu Y_{i}}.
   \end{align*}
   Exploiting the fact that $Y_{i}$'s are independent random variables, we can further simplify the above probability:
   \begin{align*}
   \Pr[S_{n}^{ \langle x \rangle} \geq  x] & \leq e^{-\mu x}\E \prod_{i=1}^{n} e^{\mu Y_{i}} \\
   & = e^{-\mu x}\prod_{i=1}^{n}\E e^{\mu Y_{i}}\\
   & = e^{-\mu x}\prod_{i=1}^{n} \sum_{k \leq x} e^{\mu k}\Pr[ Y_i=k] \\
   & = e^{-\mu x}\prod_{i=1}^{n}\sum_{k \leq x} e^{\mu k}\frac{\Pr[X_i=k]}{\Pr[X_i \leq x ]} \\
   & = e^{-\mu x} \frac{\prod_{i=1}^{n}\sum_{k \leq x} e^{\mu k}\Pr[X_i=k]}{\prod_{i=1}^{n}\Pr[X_i \leq x ]}.
   \end{align*}
   Plugging this inequality for $\Pr[S_{n}^{ \langle x \rangle} \geq  x]$ back into~\eqref{eq:final-inequality-3}, we get
   \begin{align*}
   \Pr[S_{n} \geq x] & \leq nVx^{-\alpha_r} + \Pr\left[ S_n^{\langle x \rangle} \geq x\right]\, \cdot \prod_{i=1}^{n}\Pr\left[ X_i \leq x \right] \\
   & \leq nVx^{-\alpha_r} + e^{-\mu x} \frac{\prod_{i=1}^{n}\sum_{k \leq x} e^{\mu k}\Pr[X_i=k]}{\prod_{i=1}^{n}\Pr[X_i \leq x ]}\, \cdot \prod_{i=1}^{n}\Pr\left[ X_i \leq x \right] \\
   & = nVx^{-\alpha_r} + e^{-\mu x} \prod_{i=1}^{n}\sum_{k \leq x} e^{\mu k}\Pr[X_i=k]\\
   & =: nVx^{-\alpha_r} + e^{-\mu x} \prod_{i=1}^{n}R_i(\mu,x),
   \end{align*}
   where $R_i(\mu, x):=\sum_{k \leq x} e^{\mu k}\Pr[X_i=k]$. Clearly, in order to obtain the final bound, we need to upper bound $R_i(\mu, x)$. For that purpose, we split sum in $R_i(\mu, x)$ into 3 disjoint intervals, and bound each interval separately:
   \begin{align}
   R_i(\mu, x) 	& = \sum_{k \leq x} e^{\mu k}\Pr[X_i=k] \nonumber \\
   & \leq  \Big[\sum_{k \leq 0}e^{\mu k}\Pr[X_i=k]\Big]+\Big[\sum_{k=1}^{\lfloor M \rfloor}e^{\mu k}\Pr[X_i=k]\Big]
   + \Big[\sum_{k=\lceil M \rceil}^{x}e^{\mu k}\Pr[X_i=k]\Big] \nonumber \\
   & =: I_{i,1} + I_{i,2} + I_{i,3},\nonumber
   \end{align}
   where $M=\frac{2\alpha_r}{\mu}$ (now it should be clear why we require $\mu x \rightarrow \infty$, since we would like to have  $0 \ll M < x$). 
   
  \subsection{Upper bound of $I_{i,1}$, when $0 < \alpha_r \leq 1$}
  
  The first interval is the easiest to bound, as the next lemma shows this.
  
  \begin{lemma}
    \label{lemma:Ii1}
    Let $X_i \sim \mathbb{D}(\:\cdot\:, \alpha_{r})$ be an integer valued r.v. with $0 < \alpha_r \leq 1$ and an arbitrary left tail function. Then 
    $$
    I_{i,1} = \sum_{k \leq 0}e^{\mu k}\Pr[X_i=k]
    $$ 
    is bounded from above by
    $$
    I_{i,1} \leq \Pr\left[X_i \leq 0\right].
    $$
  \end{lemma}
  \begin{proof}
  
  The proof is straightforward. For $I_{i,1}$ we have
  \begin{align*}
  I_{i,1} & = \sum_{k \leq 0}e^{\mu k}\Pr[X_i=k] \leq \sum_{k \leq 0}\Pr[X_i=k] = \Pr\left[X_i \leq 0\right].
  \end{align*}
  
  \end{proof}
  
  \subsection{Upper bound of $I_{i,2}$, when $0 < \alpha_r \leq 1$}
  
  In order to bound the second interval, we apply a slightly more sophisticated approach than that for $I_{i,1}$. Here we first apply summation by parts to express $I_{i,2}$ as a function of the right tail $F_{X_i+}(k)$, and then we use the integral bound of summation to upper bound the $I_{i,2}$ interval. 
  
  \begin{lemma}
  \label{lemma:Ii2}
  Let $X_i \sim \mathbb{D}(\:\cdot\:, \alpha_{r})$ be an integer valued r.v. with $0 < \alpha_r \leq 1$ and an arbitrary left-tail function. Then 
  $$
  I_{i,2} = \sum_{k=1}^{\lfloor M \rfloor}e^{\mu k}\Pr[X_i=k],
  $$ 
  where $M=\frac{2\alpha_r}{\mu}$ and $0 < \mu < 1$, is bounded from above by
  $$
  I_{i,2} \leq \Pr[X_i \geq 1]  + \begin{cases}
  O\left(\mu^{\alpha_r}\right), & \text{ when } 0< \alpha_r < 1,\\
  O\left(- \mu\ln\mu\right), & \text{ when } \alpha_r = 1.
  \end{cases}
  $$
\end{lemma}
\begin{proof}
As was mentioned, we first apply summation by parts (Lemma~\ref{lemma:summation-by-parts}) to $I_{i,2}$. Hence, we obtain
\begin{align*}
I_{i,2} 	& = \sum_{k=1}^{\lfloor M \rfloor}e^{\mu k}\Pr[X_i=k] \\
& =  e^{\mu}\Pr[X_i \geq 1] - e^{\mu \lfloor M \rfloor}\Pr\left[X_i \geq \lfloor M \rfloor\right] + \sum_{k=2}^{\lfloor M \rfloor}\left( e^{\mu k} - e^{\mu (k-1)} \right)\Pr[X_i \geq k] \\
& \leq  e^{\mu}\Pr[X_i \geq 1] + \sum_{k=2}^{\lfloor M \rfloor}\left( e^{\mu k} - e^{\mu (k-1)} \right)\Pr[X_i \geq k] \\
& \leq  e^{\mu}\Pr[X_i \geq 1] + \sum_{k=2}^{\lfloor M \rfloor}\left( 1 - e^{-\mu} \right)e^{\mu k}\Pr[X_i \geq k].
\end{align*}
Since $1 -\mu \leq e^{-\mu}$, we have 
\begin{align*}
I_{i,2} 	& \leq  e^{\mu}\Pr[X_i \geq 1] + \sum_{k=2}^{\lfloor M \rfloor}\left( 1 - e^{-\mu} \right)e^{\mu k}\Pr[X_i \geq k]\\
& \leq e^{\mu}\Pr[X_i \geq 1] + \mu\sum_{k=2}^{\lfloor M \rfloor}e^{\mu k}\Pr[X_i \geq k].
\end{align*}
Moreover, when $\mu$ is close to 0, then $e^{\mu} \leq 1 + 2\mu$, and therefore,
\begin{align*}
I_{i,2} 	& \leq e^{\mu}\Pr[X_i \geq 1] + \mu\sum_{k=2}^{\lfloor M \rfloor}e^{\mu k}\Pr[X_i \geq k]\\
& \leq \Pr[X_i \geq 1] + 2\mu\Pr[X_i \geq 1] + \mu\sum_{k=2}^{\lfloor M \rfloor}e^{\mu k}\Pr[X_i \geq k]\\
& \leq \Pr[X_i \geq 1] + 2\mu + \mu\sum_{k=2}^{\lfloor M \rfloor}e^{\mu k}\Pr[X_i \geq k].
\end{align*} 

Now recall that $Pr\left[ X_i \geq k \right] = F_{X_i+}(k) \leq Vk^{-\alpha_r}$ with $0 < \alpha_r \leq 1$. Then 

\begin{align*}
I_{i,2} & \leq \Pr[X_i \geq 1] + 2\mu + \mu\sum_{k=2}^{\lfloor M \rfloor}e^{\mu k}\Pr[X_i \geq k]\\
& \leq\Pr[X_i \geq 1] + 2\mu + V\mu \sum_{k=2}^{\lfloor M \rfloor}e^{\mu k}k^{-\alpha_r}\\
& \leq \Pr[X_i \geq 1] + 2\mu + Ve^{\mu \lfloor M \rfloor}\mu  \sum_{k=2}^{\lfloor M \rfloor}k^{-\alpha_r}\\
& \leq \Pr[X_i \geq 1] + 2\mu + Ve^{2\alpha_r}\mu  \sum_{k=2}^{\lfloor M \rfloor}k^{-\alpha_r},
\end{align*}
since $M= \frac{2\alpha_r}{\mu}$. 

Function $k^{-\alpha_r}$ is monotonically decreasing when $\alpha_r>0$, hence, we can apply the integral upper bound, i.e. for some decreasing in $[a \dots b]$ function $\psi(k)$, we have

\begin{align}
\sum_{k=a}^{b}\psi(k) \leq \int\limits_{a-1}^{b}\psi(t)\diff t.\label{eq:integral-bound}
\end{align}

Then  

\begin{align}
I_{i,2} & \leq \Pr[X_i \geq 1] + 2\mu + Ve^{2\alpha_r}\mu  \sum_{k=2}^{\lfloor M \rfloor}k^{-\alpha_r} \nonumber \\
& \leq \Pr[X_i \geq 1] + 2\mu + Ve^{2\alpha_r}\mu \int\limits_{1}^{M }t^{-\alpha_r} \diff t \nonumber \\
& =: \Pr[X_i \geq 1] + 2\mu + Ve^{2\alpha_r} \,I_{i,2}^0 \label{eq:integral_2},
\end{align}
where $I_{i,2}^0=\mu\int\limits_{1}^{M}t^{-\alpha_r} \diff t$. Next, let us bound the $I_{i,2}^0$ term (and recall that $M = \frac{2\alpha_r}{\mu}$):
\begin{align*}
I_{i,2}^0&=\mu\int\limits_{1}^{M}t^{-\alpha_r} \diff t\\
& = \mu \begin{cases}
 \frac{ M^{1-\alpha_r} }{1-\alpha_r}-\frac{1 }{1-\alpha_r}, & \text{ when } 0< \alpha_r < 1,\\
 \ln M, & \text{ when } \alpha_r = 1
\end{cases}\\
& \leq \begin{cases}
 \frac{\left(2\alpha_r\right)^{1-\alpha_r}}{1-\alpha_r}\mu^{\alpha_r}, & \text{ when } 0< \alpha_r < 1,\\
 \mu\ln 2 - \mu\ln\mu, & \text{ when } \alpha_r = 1
\end{cases}\\
& \leq \begin{cases}
O\left(\mu^{\alpha_r}\right), & \text{ when } 0< \alpha_r < 1,\\
O\left(- \mu\ln\mu\right), & \text{ when } \alpha_r = 1.
\end{cases}
\end{align*}

Thus, from~\eqref{eq:integral_2} we obtain
\begin{align*}
I_{i,2} & \leq \Pr[X_i \geq 1] + 2\mu + Ve^{2\alpha_r} \,I_{i,2}^0\\
& \leq \Pr[X_i \geq 1] + 2\mu + \begin{cases}
O\left(\mu^{\alpha_r}\right), & \text{ when } 0< \alpha_r < 1,\\
O\left(- \mu\ln\mu\right), & \text{ when } \alpha_r = 1,
\end{cases}\\
&= \Pr[X_i \geq 1]  + \begin{cases}
O\left(\mu^{\alpha_r}\right), & \text{ when } 0< \alpha_r < 1,\\
O\left(- \mu\ln\mu\right), & \text{ when } \alpha_r = 1,
\end{cases}
\end{align*}
where the last equality follows from the assumption that $0< \mu <1$ is some number close to 0; hence, the lemma is proved.
\end{proof}

\subsection{Upper bound of $I_{i,3}$}

Proof for this interval's bound closely resembles the proof of the bound of the second interval $I_{i,2}$ differing only in some details. However, unlike the previous interval, the bound for $I_{i,3}$ presented below is valid for any $\alpha_r>0$. We emphasize this observation, since result of Lemma~\ref{lemma:Ii3} will be re-used to prove a bound for a similar interval when $\alpha_r > 1$ (see Lemma~\ref{lemma:ji3}). 

\begin{lemma}
	\label{lemma:Ii3}
	Let $X_i \sim \mathbb{D}(\:\cdot\:, \alpha_{r})$ be an integer valued r.v. with $ \alpha_{r} > 0$ and an arbitrary left tail  function. Then 
	$$
	I_{i,3} = \sum_{k={\lceil M \rceil}}^{x}e^{\mu k}\Pr[X_i=k],
	$$ 
	where $M=\frac{2\alpha_r}{\mu}$ and $0 < \mu < 1$,	is bounded from above by
	$$
	I_{i,3} \leq O\left(\mu^{\alpha_r}\right) + Ve^{\mu x}x^{-\alpha_r}.
	$$
\end{lemma}
\begin{proof}
First, just like with the previous interval $I_{i,2}$, we apply summation by parts(Lemma~\ref{lemma:summation-by-parts}) :
\begin{align*}
I_{i,3} & = \sum_{k={\lceil M \rceil}}^{x}e^{\mu k}\Pr[X_i=k]\\
& = \sum_{k={\lceil M \rceil}}^{x}e^{\mu k}\Big(\Pr[X_i \leq k] - \Pr[X_i \leq k-1]\Big)\\
& = e^{\mu\lceil M \rceil}\Pr\left[ X_i\geq \lceil M \rceil \right] - e^{\mu x}\Pr\left[ X_i\geq x \right] +  \sum_{k=\lceil M \rceil + 1}^{x}\left(e^{\mu k} - e^{\mu(k-1)}\right)\Pr[X_i \geq k]\\
& = e^{\mu\lceil M \rceil}\Pr\left[ X_i\geq \lceil M \rceil \right] - e^{\mu x}\Pr\left[ X_i\geq x \right] +  \sum_{k=\lceil M \rceil}^{x-1}\left(e^{\mu (k+1)} - e^{\mu k}\right)\Pr[X_i \geq k+1]\\
& \leq e^{\mu\lceil M \rceil}\Pr\left[ X_i\geq \lceil M \rceil \right] +  \sum_{k=\lceil M \rceil}^{x-1}\left(e^{\mu} - 1\right)e^{\mu k}\Pr[X_i \geq k].
\end{align*}

Again, recall that $ e^{\mu} \leq 1 +2\mu$, when $\mu>0$ is close to 0. Hence, we obtain
\begin{align*}
I_{i,3} & \leq e^{\mu\lceil M \rceil}\Pr\left[ X_i\geq \lceil M \rceil \right] +  \sum_{k=\lceil M \rceil}^{x-1}\left(e^{\mu} - 1\right)e^{\mu k}\Pr[X_i \geq k]\\
& \leq e^{\mu\lceil M \rceil}\Pr\left[ X_i\geq \lceil M \rceil \right] +  2\mu\sum_{k=\lceil M \rceil}^{x-1}e^{\mu k}\Pr[X_i \geq k]\\
\end{align*}

Next, since $\Pr\left[X_i \geq k\right]=:F_{X_i+}(k) \leq V\,k^{-\alpha_r}$ when $k \geq 1$, we have
\begin{align*}
I_{i,3} & \leq e^{\mu\lceil M \rceil}\Pr\left[ X_i\geq \lceil M \rceil \right] +  2\mu\sum_{k=\lceil M \rceil }^{x-1}e^{\mu k}\Pr[X_i \geq k]\\
& \leq e^{\mu\lceil M \rceil}V\,M^{-\alpha_r}  +  2V\mu\sum_{k=\lceil M \rceil }^{x-1}e^{\mu k}k^{-\alpha_r}\\
& \leq V\frac{e^{3\alpha_r}}{(2\alpha_r)^{\alpha_r}}\,\mu^{\alpha_r}  +  2V\mu\sum_{k=\lceil M \rceil }^{x-1}e^{\mu k}k^{-\alpha_r}, \text{ since } M=\frac{2\alpha_r}{\mu}\\
& = O\left(\mu^{\alpha_r}\right)  +  2V\mu\sum_{k=\lceil M \rceil }^{x-1}e^{\mu k}k^{-\alpha_r}.
\end{align*}
Next, let's investigate the monotonicity of the function under summation. We have
$$
\frac{\diff}{\diff k}\Big[e^{\mu k}k^{-\alpha_r}\Big]=\mu e^{\mu k}k^{-\alpha_r}-\alpha_r e^{\mu k}k^{-\alpha_r-1}=\mu e^{\mu k}k^{-\alpha_r-1}\left(k-\frac{\alpha_r}{\mu}\right),
$$
which is clearly positive when $k \geq 2\alpha_r/\mu = M$. Thus, when $k \geq M$, the function $e^{\mu k}k^{-\alpha_r}$ is monotonically increasing. Therefore, we can apply the integral upper bound of summation of an increasing function, i.e. for some increasing in $[a \dots b]$ function $\psi(k)$, we have
$$
\sum_{k=a}^{b}\psi(k) \leq \int\limits_{a}^{b+1}\psi(t)\diff t.
$$
So
\begin{align*}
I_{i,3} & \leq O\left(\mu^{\alpha_r}\right)  +  2V\mu\sum_{k=\lceil M \rceil }^{x-1}e^{\mu k}k^{-\alpha_r}\\
& \leq O\left(\mu^{\alpha_r}\right) + 2V\mu\int\limits_{M}^{x}e^{\mu t}t^{-\alpha_r}\diff t.
\end{align*}
Using substitution of variables $u:=\mu(x-t)$ and $\diff t = -\frac{1}{\mu}\diff u$, the above integral can be transformed into
\begin{align}
I_{i,3} & \leq O\left(\mu^{\alpha_r}\right) + 2V\mu\int\limits_{M}^{x}e^{\mu t}t^{-\alpha_r}\diff t \nonumber \\
& = O\left(\mu^{\alpha_r}\right) - 2V\int\limits_{\mu(x-M)}^{0}e^{\mu x - u  }\Big(x-\frac{u}{\mu}\Big)^{-\alpha_r}\diff u \nonumber \\
& = O\left(\mu^{\alpha_r}\right) + 2Ve^{\mu x}x^{-\alpha_r}\int\limits_{0}^{\mu(x-M)}e^{ - u  }\Big(1-\frac{u}{\mu x}\Big)^{-\alpha_r}\diff u. \label{eq:int}
\end{align}
Now consider the function $f(u)=\Big(1-\frac{u}{\mu x}\Big)^{-\alpha_r}$. We have for $u \in \left[0\cdots\mu(x-M)\right]$
\begin{align*}
\frac{\diff}{\diff u}\ln f(u)& =\frac{\diff}{\diff u}\ln \Big(1-\frac{u}{\mu x}\Big)^{-\alpha_r}\\
& = -\alpha_r\frac{\diff}{\diff u}\ln\Big(1-\frac{u}{\mu x}\Big) \\
& = \frac{\alpha_r}{\mu x - u}\\
& \leq \frac{1}{2},
\end{align*}
where the last inequality follows from the fact that function $\frac{\alpha_r}{\mu x - u}$ reaches its maximum at the rightmost point when $u = \mu(x-M)=\mu x - 2\alpha_r$. Furthermore, $f(0)=1$, hence, $f(u) \leq e^{u/2}$, when $0 \leq u \leq \mu(x-M)$.  

Then~\eqref{eq:int} can be upper bounded by
\begin{align*}
I_{i,3} & = O\left(\mu^{\alpha_r}\right) + 2Ve^{\mu x}x^{-\alpha_r}\int\limits_{0}^{\mu(x-M)}e^{ - u  }\Big(1-\frac{u}{\mu x}\Big)^{-\alpha_r}\diff u\\
& = O\left(\mu^{\alpha_r}\right) + 2Ve^{\mu x}x^{-\alpha_r}\int\limits_{0}^{\mu(x-M)}e^{ - u  }f(u)\diff u\\
& \leq O\left(\mu^{\alpha_r}\right) + 2Ve^{\mu x}x^{-\alpha_r}\int\limits_{0}^{\mu(x-M)}e^{ - u  }e^{u/2}\diff u\\
& \leq O\left(\mu^{\alpha_r}\right) + 2Ve^{\mu x}x^{-\alpha_r}\int\limits_{0}^{\infty}e^{ - u/2  }\diff u\\
& = O\left(\mu^{\alpha_r}\right) + Ve^{\mu x}x^{-\alpha_r},
\end{align*} 
and, therefore, the lemma is proved.
\end{proof}

\subsection{Final assembling steps, when $0 < \alpha_r \leq 1$}

Recall, that the goal  was to bound
\begin{align}
   \Pr[S_{n} \geq x] & \leq nVx^{-\alpha_r} + e^{-\mu x} \prod_{i=1}^{n}R_i(\mu,x)\label{eq:sum_prob},
\end{align}
where $x = n^{\frac{1}{\alpha_r}+\epsilon}$, and
$$
R_i(\mu, x) \leq I_{i,1}+I_{i,2}+I_{i,3}.
$$  

After proving Lemmas~\ref{lemma:Ii1},~\ref{lemma:Ii2}, and~\ref{lemma:Ii3}, we can obtain a final bound of $\Pr[S_{n} \geq x]$. As was shown,
\begin{align*}
I_{i,1} & \leq \Pr[X_i \leq 0],\\
I_{i,2} & \leq \Pr[X_i \geq 1]  + \begin{cases}
  O\left(\mu^{\alpha_r}\right), & \text{ when } 0< \alpha_r < 1,\\
  O\left(- \mu\ln\mu\right), & \text{ when } \alpha_r = 1,
  \end{cases}\\
I_{i,3} & \leq O\left(\mu^{\alpha_r}\right) + Ve^{\mu x}x^{-\alpha_r}.
\end{align*}

Hence,
\begin{align}
R_i(\mu, x) & =I_{i,1}+I_{i,2}+I_{i,3} \nonumber \\
& \leq \Pr[X_i \leq 0]  + \Pr[X_i \geq 1] +  O\left(\mu^{\alpha_r}\right) + Ve^{\mu x}x^{-\alpha_r} + \begin{cases}
  O\left(\mu^{\alpha_r}\right), & \text{ when } 0< \alpha_r < 1,\\
  O\left(- \mu\ln\mu\right), & \text{ when } \alpha_r = 1,
  \end{cases} \nonumber \\
& = 1 + Ve^{\mu x}x^{-\alpha_r} + \begin{cases}
  O\left(\mu^{\alpha_r}\right), & \text{ when } 0< \alpha_r < 1,\\
  O\left(- \mu\ln\mu\right), & \text{ when } \alpha_r = 1.
  \end{cases} \nonumber \\
& =: 1 + Ve^{\mu x}x^{-\alpha_r} + T_{0}(\alpha_r,\mu)   \label{eq:R_i},
\end{align}
where 
$$
T_0(\alpha_r,\mu) = \begin{cases}
  O\left(\mu^{\alpha_r}\right), & \text{ when } 0< \alpha_r < 1,\\
  O\left(- \mu\ln\mu\right), & \text{ when } \alpha_r = 1.
  \end{cases}
$$

Since $1+x \leq e^{x}$ for every $x$, we further have from~\eqref{eq:R_i}

\begin{align*}
R_i(\mu, x) & \leq  1 + T_0(\alpha_r, \mu) + Ve^{\mu x}x^{-\alpha_r}\\
& \leq \exp\Big( T_0(\alpha_r, \mu) + Ve^{\mu x}x^{-\alpha_r} \Big),  
\end{align*}
and then~\eqref{eq:sum_prob} transforms into
\begin{align}
 \Pr[S_{n} \geq x] & \leq nVx^{-\alpha_r} + e^{-\mu x} \prod_{i=1}^{n}R_i(\mu,x) \nonumber \\
 & \leq nVx^{-\alpha_r} + e^{-\mu x} \prod_{i=1}^{n}\exp\Big( T_0(\alpha_r, \mu) + Ve^{\mu x}x^{-\alpha_r}  \Big) \nonumber \\
  & = nVx^{-\alpha_r} + e^{-\mu x}\exp\Big( nT_0(\alpha_r, \mu) + Ve^{\mu x}nx^{-\alpha_r}  \Big) \nonumber \\
 & = nVx^{-\alpha_r} + \exp\Big(-\mu x + nT_0(\alpha_r, \mu) + Ve^{\mu x}nx^{-\alpha_r}  \Big)  \label{eq:p_u}.
\end{align} 

Next, we need to fix the value of $\mu$, such that the above exponent is minimized as much as possible, while keeping $\mu \rightarrow 0$ but $\mu x \rightarrow \infty$ to make sure that $M=2\alpha_r/\mu$ is much greater than 0, yet less than $x$. One such possible value is $\mu = \frac{1}{x}\ln\frac{x^{\alpha_r}}{n}$. The next lemma verifies that the chosen value of $\mu$ satisfies both constraints, when $0<\alpha_r\leq 1$.

\begin{lemma}
Let $x=n^{\frac{1}{\alpha_r}+\epsilon}$ with $0<\alpha_r \leq 1$ and any $\epsilon>0$. Then
$$
\mu=\frac{1}{x}\ln\frac{x^{\alpha_r}}{n} \rightarrow 0,
$$
but
$$
\mu x = \ln\frac{x^{\alpha_r}}{n} \rightarrow \infty,
$$	
when $n \rightarrow \infty$.
\end{lemma}

\begin{proof}
	Simple calculation shows that
	\begin{align*}
	\mu & = \frac{1}{x}\ln\frac{x^{\alpha_r}}{n}\\
	& = x^{-1}\left(\alpha_r\ln x-\ln n\right)\\
	& = n^{-\frac{1}{\alpha_r}-\epsilon}\left(\alpha_r\ln n^{\frac{1}{\alpha_r}+\epsilon}-\ln n\right)\\
	& = n^{-\frac{1}{\alpha_r}-\epsilon}\left((1+\alpha_r\epsilon)\ln n-\ln n\right)\\
	& \leq \alpha_r\epsilon n^{-\frac{1}{\alpha_r}-\epsilon}\ln n\\
	& = o(1). 
	\end{align*}

In a similar way we prove that $\mu x \rightarrow \infty$.
	\begin{align*}
	\mu x & = \frac{x}{x}\ln\frac{x^{\alpha_r}}{n}\\
	& = \alpha_r\ln x-\ln n\\
	& = \alpha_r\ln n^{\frac{1}{\alpha_r}+\epsilon}-\ln n\\
	& = (1+\alpha_r\epsilon)\ln n-\ln n\\
	& = \alpha_r\epsilon \ln n \rightarrow \infty, 
	\end{align*}
	when $n \rightarrow \infty$.
\end{proof}

Next, after fixing $\mu$, we analyze the exponent in~\eqref{eq:p_u}
\begin{align*}
 \Pr[S_{n} \geq x] & \leq nVx^{-\alpha_r} + \exp\Big(-\mu x + nT_0(\alpha_r, \mu) + Ve^{\mu x}nx^{-\alpha_r}  \Big)\\
 & = nVx^{-\alpha_r} + \exp\Big( nT_0(\alpha_r, \mu) + \left(Ve^{\mu x}nx^{-\alpha_r}-\mu x\right)  \Big)
\end{align*}
by studying asymptotic behaviour of its two components, i.e. $nT_0(\alpha_r, \mu) $ and $Ve^{\mu x}nx^{-\alpha_r}-\mu x$. 

\begin{lemma}
	\label{lemma:W_1}
	Let $x=n^{\frac{1}{\alpha_r}+\epsilon}$ and $\mu=\frac{1}{x}\ln\frac{x^{\alpha_r}}{n}$ with $0<\alpha_r \leq 1$ and any $\epsilon>0$. Then
	$$
	 nT_0(\alpha_r, \mu) = o(1),
	$$	
	where 
	$$
	T_0(\alpha_r, \mu)  = \begin{cases}
	O\left(\mu^{\alpha_r}\right), & \text{ when } 0< \alpha_r < 1,\\
	O\left(- \mu\ln \mu\right), & \text{ when } \alpha_r = 1.
	\end{cases}
	$$
\end{lemma}
\begin{proof}
	Let us consider cases. When $0 < \alpha_r < 1$, we have $T_0(\alpha_r, \mu)  =O\left(\mu^{\alpha_r}\right)$, and so
	\begin{align*}
	nT_0(\alpha_r, \mu) & = O\left(n\mu^{\alpha_r}\right)\\
	& = O\left(\frac{n}{x^{\alpha_r}}\ln^{\alpha_r}\frac{x^{\alpha_r}}{n}\right)\\
	& = O\left(\frac{n}{n^{1+\alpha_r\epsilon}}\ln^{\alpha_r}\frac{n^{1+\alpha_r\epsilon}}{n}\right)\\
	& = O\left(n^{-\alpha_r\epsilon} \ln^{\alpha_r}n^{\alpha_r\epsilon} \right)\\
	& = o(1).
	\end{align*}
	However, when $\alpha_r=1$, then $T_0(\alpha_r, \mu)=O\left(\mu\ln \mu\right)$, while
	\begin{align*}
	nT_0(\alpha_r, \mu) &= O\left(-n\mu\ln \mu\right)\\
	 &= O\left(-\frac{n}{x}\ln\frac{x^{\alpha_r}}{n} \cdot\ln \left(\frac{1}{x}\ln\frac{x^{\alpha_r}}{n}\right)\right) \\
	 &= O\left(-\frac{n}{n^{1/\alpha_r+\epsilon}}\ln\frac{n^{1+\alpha_r\epsilon}}{n} \cdot\ln \left(\frac{1}{n^{1/\alpha_r+\epsilon}}\ln\frac{n^{1+\alpha_r\epsilon}}{n} \right)\right)\\
	 &= O\left(-\frac{n}{n^{1+\epsilon}}\ln\frac{n^{1+\epsilon}}{n} \cdot\ln\left( \frac{1}{n^{1+\epsilon}}\ln\frac{n^{1+\epsilon}}{n}\right)\right), \text{ since } \alpha_r=1 \\
	 &= O\left(-\epsilon n^{-\epsilon}\ln n \cdot\ln\left( \epsilon n^{-(1+\epsilon)}\ln n\right)\right)\\
	  &= O\left(\epsilon(1+\epsilon) n^{-\epsilon}\ln^2 n - \epsilon n^{-\epsilon}\ln n\ln\ln n^{\epsilon}\right)\\
	 &= O\left( n^{-\epsilon}\ln^2 n\right), \text{ since } n^{-\epsilon}\ln n\ln\ln n^{\epsilon} > 0 \\
	 & = O\left( n^{-\epsilon}\ln^2 n\right)\\
	 & = o(1).
	\end{align*}
	Thus, after combining two cases, we see that $nT_0(\alpha_r, \mu) \rightarrow 0$, and so the lemma follows.
\end{proof}

However, unlike the $nT_0(\alpha_r, \mu)$ term, which approaches 0, when $n \rightarrow \infty$, the term $(Ve^{\mu x}nx^{-\alpha_r}-\mu x) \rightarrow -\infty$, and the next lemma states this fact in a more rigorous way.

\begin{lemma}
	\label{lemma:W_2}
	Let $x=n^{\frac{1}{\alpha_r}+\epsilon}$ and $\mu=\frac{1}{x}\ln\frac{x^{\alpha_r}}{n}$ with $0<\alpha_r \leq 1$ and any $\epsilon>0$. Then
	$$
	Ve^{\mu x}nx^{-\alpha_r}-\mu x = V - \alpha_r\epsilon\ln n.
	$$	
\end{lemma}  
\begin{proof}
	Clearly,
	\begin{align*}
		Ve^{\mu x}nx^{-\alpha_r}-\mu x & = Ve^{\frac{x}{x}\ln \frac{x^{\alpha_r}}{n}}nx^{-\alpha_r}-\frac{x}{x}\ln\frac{x^{\alpha_r}}{n}\\
		& = V-\ln\frac{x^{\alpha_r}}{n}\\
		& = V-\ln\frac{n^{1+\alpha_r\epsilon}}{n}\\
		& = V - \alpha_r\epsilon\ln n. \qedhere
	\end{align*}
\end{proof}

Hence, after collecting results of Lemmas~\ref{lemma:W_1} and~\ref{lemma:W_2}, the inequality~\eqref{eq:p_u} transforms into
\begin{align*}
\Pr[S_{n} \geq x] & = nVx^{-\alpha_r} + \exp\Big( nT_0(\alpha_r, \mu) + \left(Ve^{\mu x}nx^{-\alpha_r}  - \mu x\right) \Big) \\
& \leq nVx^{-\alpha_r} + \exp\Big( o(1) + V - \alpha_r\epsilon\ln n \Big) \\
& \leq nVx^{-\alpha_r} + \exp\Big( 2V - \alpha_r\epsilon\ln n \Big) \\
& = nVx^{-\alpha_r} + e^{2V} n^{-\alpha_r\epsilon},
\end{align*}  
and after recalling that $x=n^{\frac{1}{\alpha_r}+\epsilon}$, we obtain the final form of the above inequality
\begin{align*}
\Pr[S_{n} \geq n^{\frac{1}{\alpha_r}+\epsilon}] & \leq \left(V+e^{2V}\right) n^{-\alpha_r\epsilon},
\end{align*}  
which proves Theorem~\ref{thr:bound_1}.
\end{proof}

Theorem~\ref{thr:bound_1} implies an obvious corollary that asserts an upper bound of values for $S_n$, when every $X_i$ in the aforementioned sum has 
a right-tail function that can be bounded by $V_i\,x^{-\alpha_{r,i}}$ with $0 < \alpha_{r,i} \leq 1$:

\begin{corollary}
Let $S_n=\sum_{i=1}^{n}X_i$, where $X_i \sim \mathbb{D}\left(\:\cdot\:, \alpha_{r} \right)$ are independent not necessary identically distributed integer-valued random variables, and $0< \alpha_r \leq 1$. Then w.h.p.
$$
S_n \leq C\,n^{1/\alpha_r},
$$
where $C>0$ is some constant.
\end{corollary}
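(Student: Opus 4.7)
The plan is to obtain the corollary as a one-step re-parameterization of Theorem~\ref{thr:bound_1}, converting the theorem's polynomial slack $n^\epsilon$ into a multiplicative constant. Given a target constant $C>1$, I set $\epsilon_n := (\ln C)/\ln n$, which is positive and tends to $0$ and satisfies the identity $n^{1/\alpha_r+\epsilon_n}=C\,n^{1/\alpha_r}$. Plugging this choice of $\epsilon$ into the conclusion of Theorem~\ref{thr:bound_1} gives
\[
\Pr\!\bigl[S_n \geq C\,n^{1/\alpha_r}\bigr] \;\leq\; (V+e^{2V})\,n^{-\alpha_r\epsilon_n} \;=\; \frac{V+e^{2V}}{C^{\alpha_r}}.
\]
Taking $C \geq \bigl((V+e^{2V})/\delta\bigr)^{1/\alpha_r}$ drives this bound below any preassigned $\delta>0$, so for an appropriately chosen constant $C = C(V,\alpha_r)$ we obtain $S_n \leq C\,n^{1/\alpha_r}$ with high probability.

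The only step that requires scrutiny is that Theorem~\ref{thr:bound_1} was written with $\epsilon$ treated as a fixed positive constant, whereas here I am applying it at $\epsilon = \epsilon_n \to 0$. I would walk through its proof one more time with this substitution. The auxiliary parameter is $\mu = x^{-1}\ln(x^{\alpha_r}/n)$; with $x = C\,n^{1/\alpha_r}$ this evaluates to $\mu = \alpha_r\ln C/x$, hence $\mu \to 0$ (so the hypothesis $\mu<1$ holds for $n$ large) and $\mu x = \alpha_r \ln C$. The cutoff $M = 2\alpha_r/\mu$ therefore satisfies $M/x = 2/\ln C$, which is less than $1$ as long as $C>e^2$ — a mild requirement that is already forced by the size condition on $C$ above. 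Lemmas~\ref{lemma:Ii1}-\ref{lemma:Ii3} apply verbatim, and tracking the constants through the final assembly replaces the $o(1)$ conclusions of Lemmas~\ref{lemma:W_1}-\ref{lemma:W_2} with bounded quantities that are absorbed into the factor $1/C^{\alpha_r}$, yielding exactly the bound displayed above.

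The most delicate part of the argument, and where I would spend the most effort, is the boundary case $\alpha_r = 1$. There the term $n T_0(\alpha_r,\mu)$ in Lemma~\ref{lemma:W_1} is of order $\mu n \ln(1/\mu)$, which with $\mu = \Theta(1/n)$ evaluates to $\Theta(\ln n)$ rather than $o(1)$; the nice cancellation $V - \alpha_r\epsilon \ln n$ from Lemma~\ref{lemma:W_2} is overwhelmed by this logarithmic term. The fix is to take $\epsilon_n = (\ln\ln n)/\ln n$ instead of $(\ln C)/\ln n$; this gives $n^{1/\alpha_r+\epsilon_n} = n\ln n$, and the probability bound becomes $(V+e^{2V})(\ln n)^{-\alpha_r}=o(1)$, a genuine w.h.p.\ conclusion at the (necessary) cost of absorbing a $\ln n$ factor into the constant. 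For $\alpha_r \in (0,1)$ the $\epsilon_n = (\ln C)/\ln n$ argument of the previous paragraph goes through cleanly and delivers the corollary as stated.
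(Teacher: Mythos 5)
You are right that the paper itself supplies no argument here (it just calls the corollary ``obvious''), so your attempt to actually reach the literal statement is more than the authors do --- and it is instructive precisely because it shows the literal statement cannot be reached. The re-parameterization $\epsilon_n=(\ln C)/\ln n$ and the check that the proof of Theorem~\ref{thr:bound_1} survives it (with $\mu x=\alpha_r\ln C$ and $M<x$ iff $C>e^2$) are fine for $0<\alpha_r<1$. But the bound you land on, $\Pr\left[S_n\geq Cn^{1/\alpha_r}\right]\leq (V+e^{2V}+o(1))\,C^{-\alpha_r}$, is a \emph{positive constant} for each fixed $C$; it does not tend to $0$. Since ``w.h.p.''\ is defined in the paper as probability tending to $1$, what you have proved is tightness, $S_n=O_P(n^{1/\alpha_r})$ --- for every $\delta>0$ there is a $C(\delta)$ with $\limsup_n\Pr[S_n>C(\delta)n^{1/\alpha_r}]\leq\delta$ --- not the corollary. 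The gap is not repairable: for i.i.d.\ nonnegative $X_i$ with $\Pr[X_i\geq k]=k^{-\alpha_r}$, the ratio $S_n/n^{1/\alpha_r}$ converges to a nondegenerate $\alpha_r$-stable law with unbounded support, so $\Pr[S_n>Cn^{1/\alpha_r}]$ has a strictly positive limit for every constant $C$. Your write-up should therefore not conclude with ``we obtain $S_n\leq Cn^{1/\alpha_r}$ with high probability''; the honest conclusions are the $O_P$ bound you actually derived, or the genuine w.h.p.\ bound $S_n\leq n^{1/\alpha_r+\epsilon}$ obtained from the theorem at fixed $\epsilon$.

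Your diagnosis of the boundary case $\alpha_r=1$ is correct in spirit (and indeed, by the same stable-limit heuristic, $S_n$ is typically of order $n\ln n$ there, so $Cn$ is hopeless), but the proposed repair does not close anything. First, $n\ln n$ is not $C\,n^{1/\alpha_r}$, so even if it worked you would be proving a different statement from the one under review. Second, the choice $\epsilon_n=(\ln\ln n)/\ln n$ is not actually verified and in general fails: it gives $\mu x=\ln\ln n$ while $nT_0(\alpha_r,\mu)=\Theta\left(Ve^{2}\ln\ln n\right)$, so the exponent in~\eqref{eq:p_u} behaves like $(Ve^{2}-1+o(1))\ln\ln n+V$, which tends to $+\infty$ unless $V<e^{-2}$. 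One needs, say, $x=n(\ln n)^K$ with $K>1$ to force $nT_0=o(1)$. In any case you cannot cite Theorem~\ref{thr:bound_1} as a black box at an $n$-dependent $\epsilon$ without this re-verification, since the theorem is stated only for fixed $\epsilon>0$.
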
   


Note, that in Theorem~\ref{thr:bound_1}, where we were interested in $S_n$ exceeding some \textit{positive} $x=n^{\frac{1}{\alpha_r}+\epsilon}$, we completely ignored the left tails of $X_i$'s. However, if we were trying to bound $S_n$ from below, then the left tails of the random variables $X_i$'s play a vital role, and the following theorem verifies this fact.

\begin{theorem}
	\label{thr:bound_2}
	Let $S_n=\sum_{i=1}^{n}X_i$, where $X_i \sim \mathbb{D}\left(\alpha_{l},\:\cdot\: \right)$ are independent not necessary identically distributed integer-valued random variables, and $0< \alpha_{l} \leq 1$. Then for any $\epsilon > 0$, we have
	\begin{align*}
	\Pr\left[S_n \leq -n^{\frac{1}{\alpha_l}+\epsilon} \right]\leq \left(W+e^{2W}\right) n^{-\alpha_l\epsilon},
	\end{align*}
	when $n \rightarrow \infty$.
\end{theorem}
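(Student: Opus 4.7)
My plan is to deduce Theorem \ref{thr:bound_2} from Theorem \ref{thr:bound_1} by the symmetry $X \mapsto -X$, which exchanges the role of the left and right tails. Concretely, I would define $X_i' := -X_i$ and $S_n' := \sum_{i=1}^n X_i' = -S_n$. Since each $X_i$ is integer-valued and independent, so is each $X_i'$, so the hypotheses on the variables carry over.

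The first step is to verify that $X_i' \sim \mathbb{D}(\,\cdot\,, \alpha_l)$ with right-tail constant at most $W$. This is immediate from the definition of the tail functions: for $x > 0$,
\[
F_{X_i'+}(x) = \Pr[-X_i \geq x] = \Pr[X_i \leq -x] = F_{X_i-}(x) \leq W x^{-\alpha_l}.
\]
Thus the sequence $\{X_i'\}$ satisfies exactly the hypothesis of Theorem \ref{thr:bound_1}, with the role of $V$ played by $W$ and the role of $\alpha_r$ played by $\alpha_l$ (and $0 < \alpha_l \leq 1$ by assumption).

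The second step is to apply Theorem \ref{thr:bound_1} to $S_n'$: for every $\epsilon > 0$,
\[
\Pr\left[S_n' \geq n^{\frac{1}{\alpha_l}+\epsilon}\right] \leq \left(W + e^{2W}\right) n^{-\alpha_l \epsilon},
\]
as $n \to \infty$. Since $S_n' = -S_n$, the event $\{S_n' \geq n^{1/\alpha_l + \epsilon}\}$ coincides with $\{S_n \leq -n^{1/\alpha_l + \epsilon}\}$, and the stated bound follows.

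I do not expect any significant obstacle: the whole argument is a one-line reduction via negation, and the only thing worth checking carefully is the bookkeeping between the left-tail constant and exponent of $X_i$ and the right-tail constant and exponent of $-X_i$, which is straightforward because the tail functions are defined in terms of the absolute value of the threshold. No new estimate of $R_i(\mu,x)$ or of the auxiliary intervals $I_{i,1},I_{i,2},I_{i,3}$ is required; the entire technical content has already been absorbed into Theorem \ref{thr:bound_1}.
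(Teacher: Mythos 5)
Your proposal is correct and matches the paper's proof exactly: both introduce $X_i' \overset{d}{=} -X_i$, observe that $F_{X_i'+}(x) = F_{X_i-}(x) \leq W x^{-\alpha_l}$ so that $X_i' \sim \mathbb{D}(\,\cdot\,,\alpha_l)$, and then apply Theorem~\ref{thr:bound_1} to $S_n' = -S_n$.
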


\begin{proof}
	After establishing Theorem~\ref{thr:bound_1}, proof of the left tail bound is trivial. First, let's introduce random variables $X_i^{'}$ that have the same distributions as $-X_i$, i.e. $X_i^{'} \overset{d}{=} -X_i$. Clearly $X_i^{'} \sim \mathbb{D}\left( \:\cdot\:, \alpha_l\right)$, hence, from Definiton~\ref{def:D}, it follows that $F_{X_i^{'}+}(k)\leq W_{X_{i}}k^{-\alpha_{l}}$.
	
	Then for $x=n^{1/\alpha_r+\epsilon}$ we have
	\begin{align*}
		\Pr\left[S_n \leq -x\right] & = \Pr\left[ \sum_{i=1}^{n}X_i \leq -x\right] \\
		& = \Pr\left[ -\sum_{i=1}^{n}X_i \geq x\right]\\
		& = \Pr\left[ \sum_{i=1}^{n}-X_i \geq x\right]\\
		& = \Pr\left[ \sum_{i=1}^{n}X_i^{'} \geq x\right]\\
		& = \Pr\left[ S_n^{'} \geq x\right], \text{ where } S_n^{'}:=\sum_{i=1}^{n}X_i^{'}\\
		& \leq \left(W + e^{2W}\right)\,  n^{-\alpha_l\epsilon},
	\end{align*}
	where the last inequality follows after applying Theorem~\ref{thr:bound_1} to the sum $S_n^{'}$,  which consists of random variables with the right tail  functions that can be bounded by some $Wx^{-\alpha_l}$. What is left is to recall that $x=n^{1/\alpha_l+\epsilon}$, and the left-tail bound is established.
\end{proof}

Similarly to the right-tail bound of $S_n$, we can state a corollary that restricts the lower range of values of $S_n$.

\begin{corollary}
	Let $S_n=\sum_{i=1}^{n}X_i$, where $X_i \sim \mathbb{D}\left(\alpha_{l}, \:\cdot\: \right)$ are independent not necessary identically distributed integer-valued random variables, and $0< \alpha_l \leq 1$. Then w.h.p.
	$$
	-C\,n^{1/\alpha_l} \leq S_n,
	$$
	where $C>0$ is some constant.
\end{corollary}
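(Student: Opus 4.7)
The plan is to reduce this left-tail corollary to the already-established right-tail corollary by the same sign-flipping trick used in the proof of Theorem~\ref{thr:bound_2}. This keeps the proof short and avoids redoing any tail calculations; the heavy lifting has all been done inside Theorems~\ref{thr:bound_1} and~\ref{thr:bound_2} and the preceding corollary.

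First, introduce the reflected variables $X_i':=-X_i$. If $X_i\sim\mathbb{D}(\alpha_{l,i},\,\cdot\,)$ with left-tail constant $W_i$, then for every $x>0$ we have $F_{X_i'+}(x)=\Pr[-X_i\ge x]=\Pr[X_i\le -x]=F_{X_i-}(x)\le W_i\,x^{-\alpha_{l,i}}$, so $X_i'\sim\mathbb{D}(\,\cdot\,,\alpha_{l,i})$ with right-tail constant $W_i$. Taking $\alpha_l=\min_i\alpha_{l,i}$ and $W=\max_i W_i$, as in the preliminaries, the sequence $\{X_i'\}_{i=1}^n$ satisfies the hypotheses of the previously proved right-tail corollary with exponent $\alpha_l$ and constant $W$.

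Second, observe that $S_n':=\sum_{i=1}^n X_i'=-S_n$, so the event $\{S_n<-C\,n^{1/\alpha_l}\}$ coincides with the event $\{S_n'>C\,n^{1/\alpha_l}\}$. Applying the right-tail corollary to $S_n'$ (with $\alpha_r$ replaced by $\alpha_l$ and $V$ replaced by $W$) produces a constant $C>0$ such that $\Pr[S_n'>C\,n^{1/\alpha_l}]\to 0$ as $n\to\infty$. Consequently $\Pr[S_n\ge -C\,n^{1/\alpha_l}]\to 1$, which is exactly the claimed w.h.p.\ statement.

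Main obstacle: there is essentially no obstacle, since the substantive content was absorbed into the right-tail corollary. The only thing to be careful about is the bookkeeping step that the sign flip converts left-tail majorization with exponent $\alpha_l$ and constant $W$ into right-tail majorization with the same exponent and constant, and hence that the constant $C$ supplied by the right-tail corollary under the symbolic substitution $(\alpha_r,V)\mapsto(\alpha_l,W)$ is a legitimate constant in the original problem's parameters.
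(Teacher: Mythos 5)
Your proof is correct and is essentially the paper's argument: the paper obtains this corollary from Theorem~\ref{thr:bound_2}, which is itself proved by exactly the sign-flip reduction you use, so applying the flip at the level of the right-tail corollary rather than at the level of Theorem~\ref{thr:bound_1} is only a cosmetic reordering. Your bookkeeping that $F_{X_i'+}(x)=F_{X_i-}(x)\le W_i x^{-\alpha_{l,i}}$ matches the paper's.
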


\section{Tail bounds of $S_n-\E S_n$, when $\alpha_l, \alpha_r > 1$}

As was mentioned previously, Theorem~\ref{thr:bound_1} and Theorem~\ref{thr:bound_2} can be used to upper and lower bound the sum of integer-valued random variables, when the variables have left- or right-tail function that can be bounded in the ''best`` case by $Ck^{-\alpha}$ with constants $C>0$ and $0<\alpha\leq 1$. However, if the variables, which the sum consists of, have distributions from $\mathbb{D}(\alpha_l, \alpha_r)$ with both $\alpha_l > 1$ and $\alpha_r > 1$, then, these variables must have finite expectations, and therefore, the sum itself has finite expectation; moreover, as we show in Theorems~\ref{thr:bound_3} and~\ref{thr:bound_4}, the sum does not deviate much from its expected value. This result is summarized in the Corollary~\ref{cor:concentration}. 

Note, that results in this section require that both tails of every variable $X_i$ can be bounded by a power function with power $-\alpha$, where $\alpha>1$.  

First, let's verify that the variable $X \sim \mathbb{D}(>1,>1)$ has finite expectation. For that we provide an alternative way to compute expectation of random variables, which is a generalization of the tail sum of expectation:

\begin{lemma}[Generalized Tails Sum Formula]
\label{lemma:generalized-expectation}	
Let $X$ be a random variable with support on $\mathbb{Z}$, such that $\E X$ exists. Then
$$
\E X = \sum_{j=1}^{\infty}F_{X+}(j)-\sum_{j=1}^{\infty}F_{X-}(j).
$$	
\end{lemma}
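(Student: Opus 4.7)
The plan is to split the expectation at $0$ and apply the summation-by-parts identities of Lemma~\ref{lemma:summation-by-parts} to each half separately, turning the weighted sum over $\Pr[X=d]$ into a telescoped sum over tails. Concretely, write
$$
\E X \;=\; \sum_{d\ge 1} d\,\Pr[X=d] \;+\; \sum_{d\le -1} d\,\Pr[X=d],
$$
since $d=0$ contributes nothing. I would then treat the two sums symmetrically.

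For the positive half, apply the second identity of Lemma~\ref{lemma:summation-by-parts} with $f(d)=d$, $a=1$, $b=n$. Since $f(d)-f(d-1)=1$, this identity collapses to
$$
\sum_{d=1}^{n} d\,\Pr[X=d] \;=\; F_{X+}(1) \;-\; n\,F_{X+}(n+1) \;+\; \sum_{d=2}^{n} F_{X+}(d) \;=\; \sum_{d=1}^{n} F_{X+}(d) \;-\; n\,F_{X+}(n+1).
$$
Letting $n\to\infty$, the boundary term $n\,F_{X+}(n+1)$ vanishes because the existence of $\E X$ forces $\sum_{j\ge 1} F_{X+}(j)<\infty$ and hence $n\,F_{X+}(n+1)\to 0$ (this is the only nontrivial point, and it is standard). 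This yields $\sum_{d\ge 1} d\,\Pr[X=d]=\sum_{j\ge 1}F_{X+}(j)$.

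For the negative half, apply the first identity of Lemma~\ref{lemma:summation-by-parts} with $f(d)=-d$ (which is nonnegative on $d\le 0$). Here $f(0)=0$ kills the boundary term, and $f(d)-f(d+1)=-d-(-d-1)=1$, so the identity gives
$$
\sum_{d\le 0} (-d)\,\Pr[X=d] \;=\; \sum_{d\le -1} \Pr[X\le d] \;=\; \sum_{j\ge 1} \Pr[X\le -j] \;=\; \sum_{j\ge 1} F_{X-}(j),
$$
i.e.\ $\sum_{d\le -1} d\,\Pr[X=d] = -\sum_{j\ge 1} F_{X-}(j)$ (the $d=0$ term contributes $0$). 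Adding the two halves gives the claimed formula.

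The only real obstacle is the vanishing boundary term $n\,F_{X+}(n+1)\to 0$ (and its mirror image on the negative side); everything else is bookkeeping inside the already-proved summation-by-parts lemma. I would justify this boundary vanishing in one line from the convergence of the tail series, which in turn is equivalent to the existence of $\E X$ that we are assuming.
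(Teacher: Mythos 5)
Your proof is correct, but it takes a genuinely different route from the paper's. You invoke the already-proved summation-by-parts lemma (Lemma~\ref{lemma:summation-by-parts}) with $f(d)=d$ on a truncated range $[1,n]$, which produces a boundary term $n\,F_{X+}(n+1)$ that you must argue vanishes as $n\to\infty$; you do the mirror image on the negative side. The paper instead avoids Abel summation entirely: it writes $k=\sum_{j=1}^{k}1$, obtaining
$\sum_{k\ge 1}k\Pr[X=k]=\sum_{k\ge 1}\sum_{j=1}^{k}\Pr[X=k]$,
and then swaps the order of summation (Tonelli for nonnegative terms) to get $\sum_{j\ge 1}\Pr[X\ge j]$ directly, with no truncation and hence no boundary term to kill. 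Both are standard; yours has the virtue of reusing the lemma the paper just proved, while the paper's is shorter because all manipulations stay within absolutely convergent nonnegative double sums.

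One small point worth tightening in your argument: you justify $n\,F_{X+}(n+1)\to 0$ by first claiming that existence of $\E X$ forces $\sum_{j\ge 1}F_{X+}(j)<\infty$. That finiteness is itself essentially the positive half of the identity you are proving, so invoking it here is a slight logical detour (not wrong, since it is indeed a standard equivalent fact, but it is cleaner to avoid). A more direct route is to observe
\[
n\,F_{X+}(n+1)=n\sum_{k\ge n+1}\Pr[X=k]\le\sum_{k\ge n+1}k\,\Pr[X=k],
\]
which is the tail of the convergent series $\E X^{+}=\sum_{k\ge 1}k\,\Pr[X=k]$ and therefore tends to $0$. With that substitution your argument is airtight and self-contained.
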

\begin{proof}
	From the definition of expectation, it follows that
	\begin{align*}
	\E X & = \sum_{k}k\Pr\left[X=k\right]\\
	& = \sum_{k \leq -1}k\Pr\left[X=k\right]+\sum_{k\geq 1}k\Pr\left[X=k\right]\\
	& = \sum_{k=1}^{\infty}\left(-k\right)\Pr\left[X=-k\right]+\sum_{k=1}^{\infty}k\Pr\left[X=k\right]\\
	& = \sum_{k=1}^{\infty} k\Pr\left[X=k\right] - \sum_{k=1}^{\infty} k\Pr\left[X=-k\right]\\
	& = \sum_{k=1}^{\infty} \sum_{j=1}^{k}\Pr\left[X=k\right] - \sum_{k=1}^{\infty} \sum_{j=1}^{k}\Pr\left[X=-k\right]\\
	& = \sum_{j=1}^{\infty}\sum_{k=j}^{\infty}\Pr\left[X=k\right]-\sum_{j=1}^{\infty}\sum_{k=j}^{\infty}\Pr\left[X=-k\right]\\
	& = \sum_{j=1}^{\infty}\Pr\left[X \geq j\right]-\sum_{j=1}^{\infty}\Pr\left[X \leq -j\right].
	\end{align*}
	Recall that $F_{X+}(j):=\Pr\left[X_i \geq j\right]$ and $F_{X-}(j):=\Pr\left[X_i \leq -j\right]$ for every $j \geq 1$, and, therefore,
	\begin{align}
	\E X & = \sum_{j=1}^{\infty}\Pr\left[X \geq j\right]-\sum_{j=1}^{\infty}\Pr\left[X \leq -j\right] \nonumber \\
	& = \sum_{j=1}^{\infty}F_{X+}(j)-\sum_{j=1}^{\infty}F_{X-}(j)\label{eq:tail_sum}.
	\end{align}
	The lemma is proved.
\end{proof}

Now, after establishing an alternative way for calculating the expected value of a random variable, we can state a simple corollary, which verifies that random variables from $\mathbb{D}(>1,>1)$ have finite expectations.

\begin{corollary}
\label{lemma-expectation}
An integer-valued r.v. $X \sim \mathbb{D}(>1,>1)$ has finite expectation.
\end{corollary}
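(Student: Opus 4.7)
The plan is to invoke the Generalized Tails Sum Formula (Lemma~\ref{lemma:generalized-expectation}) and bound the two resulting series term-by-term using the hypothesis that both tails decay like a power with exponent strictly greater than one.

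First, I would note that by the definition of $\mathbb{D}(>1,>1)$ there exist constants $V,W>0$ and $\alpha_r,\alpha_l>1$ with
\[
F_{X+}(j)\le V\,j^{-\alpha_r},\qquad F_{X-}(j)\le W\,j^{-\alpha_l}\qquad\text{for every integer }j\ge 1.
\]
Because the terms of both series are nonnegative, I can compare them to the $p$-series $\sum_{j\ge 1} j^{-\alpha}$. By the standard integral test, $\sum_{j\ge 1} j^{-\alpha}$ converges whenever $\alpha>1$, so both $\sum_{j\ge 1} F_{X+}(j)$ and $\sum_{j\ge 1} F_{X-}(j)$ converge; in fact
\[
\sum_{j=1}^{\infty} F_{X+}(j)\le V\sum_{j=1}^{\infty} j^{-\alpha_r}<\infty,\qquad \sum_{j=1}^{\infty} F_{X-}(j)\le W\sum_{j=1}^{\infty} j^{-\alpha_l}<\infty.
\]

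The only subtlety is that Lemma~\ref{lemma:generalized-expectation} is stated under the a priori assumption that $\E X$ exists. To sidestep this, I would first establish that $\E|X|<\infty$ by repeating the derivation in the lemma for $|X|$: writing
\[
\E |X| = \sum_{k\ge 1} k\Pr[X=k]+\sum_{k\ge 1} k\Pr[X=-k]=\sum_{j=1}^{\infty} F_{X+}(j)+\sum_{j=1}^{\infty} F_{X-}(j),
\]
the two series on the right are precisely the ones just shown to be finite. Hence $\E|X|<\infty$, which guarantees that $\E X$ exists and is finite, and the corollary follows.

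I do not expect any real obstacle; the routine check is that one has the right to apply the tails sum formula, which is why I would first verify $\E|X|<\infty$ via the same telescoping argument applied to $|X|$ before quoting Lemma~\ref{lemma:generalized-expectation} itself.
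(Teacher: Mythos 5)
Your proposal is correct and follows the same core idea as the paper: identify the two series $R=\sum_{j\ge1}F_{X+}(j)$ and $L=\sum_{j\ge1}F_{X-}(j)$, dominate each by a convergent $p$-series since $\alpha_r,\alpha_l>1$, and conclude finiteness of $\E X$ from Lemma~\ref{lemma:generalized-expectation}. The one place you go further than the paper is worth highlighting: you notice that Lemma~\ref{lemma:generalized-expectation} is stated under the hypothesis that $\E X$ already exists (its proof reorders a doubly-indexed sum with terms of both signs, which needs absolute convergence), so invoking it to \emph{establish} the existence of $\E X$ is, strictly read, circular. The paper's proof does exactly that without comment. Your fix --- run the same telescoping/tail-sum identity on $|X|$ first, where every term is nonnegative so the rearrangement is unconditionally valid by Tonelli, obtain $\E|X|=\sum_{j\ge1}F_{X+}(j)+\sum_{j\ge1}F_{X-}(j)<\infty$, and only then quote the lemma --- is the correct repair and makes the argument airtight. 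So: same approach, same estimates, but you close a small logical gap that the paper leaves open.
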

\begin{proof}
Since $X \sim \mathbb{D}(>1,>1)$, we have that
$$
F_{X+}(k) \leq V \, k^{-\alpha_r}, \quad \text{ and } \quad F_{X-}(k) \leq W\,k^{-\alpha_l} \text{ for any } k > 0, 
$$
where $V,W > 0$ and $\alpha_l,\alpha_r > 1$.

Next, introduce quantities 
$$
R=\sum_{k=1}^{\infty}F_{X+}(k) \quad \text{  and  } \quad L=\sum_{k=1}^{\infty}F_{X-}(k).
$$
Since $0\leq F_{X+}(k)\leq V\,k^{-\alpha_r}$ and $\alpha_r > 1$, we have that
$$
R = \sum_{k=1}^{\infty}F_{X+}(k) \leq V\sum_{k=1}^{\infty}k^{-\alpha_r} < \infty.
$$
Moreover, since $R$ is the sum of non-negative terms, we obtain that $0 \leq R < \infty$. The same idea we apply to $|L|$.

Since both $0 \leq R,|L| < \infty$ are finite, we obtain from~\ref{lemma:generalized-expectation} that
$$
|\E X| = \left|\sum_{j=1}^{\infty}F_{X+}(j)-\sum_{j=1}^{\infty}F_{X-}(j)\right| = |R - L| \leq |R| + |L| < \infty. 
$$  

Hence, when $X \sim \mathbb{D}(>1,>1)$, then $|\E X| < \infty$.

\end{proof}

Therefore, every random variable $X \sim \mathbb{D}\left(\alpha_l, \alpha_r\right)$ with $\alpha_l,\alpha_r>1$ has finite expectation.
We exploit this fact in order to obtain tighter bounds for the sum $S_n$. We show that whenever random variables have tail functions that can be bounded by $Vx^{-\alpha}$ with $\alpha>1$, then the sum of such variables is concentrated around its mean. 

The next two theorems assert this fact, where first we show that $S_n$ does not deviate much to the right from $\E S_n$, while the second theorem states a similar result but for the deviation to the left from the expectation.

\begin{theorem}
\label{thr:bound_3}
Let $S_n=\sum_{i=1}^{n}X_i$, where $X_i \sim \mathbb{D}\left(\alpha,\alpha \right)$ are independent not necessary identically distributed integer-valued random variables with $\alpha > 1$. Then for any $\epsilon > 0$, we have
$$
\Pr\left[S_n-\E S_n \geq n^{\max(1/\alpha,\,1/2) + \epsilon}\right]  \leq Vn^{1-\max(1,\,\alpha/2) - \alpha\epsilon} + e^{2V}n^{-\alpha\epsilon},
$$
when $n \rightarrow \infty$.
\end{theorem}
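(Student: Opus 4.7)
The plan is to follow the truncate-then-Chernoff strategy of Theorem~\ref{thr:bound_1} with two new twists: (i) apply the exponential Markov inequality to the centered sum $\sum_i(X_i-\E X_i)$ so that the linear-in-$\mu$ term cancels, and (ii) push the Taylor expansion of $e^{\mu k}$ one order higher in the middle interval to expose a variance-like contribution of order $\mu^{\min(\alpha,2)}$. Setting $x=n^{\max(1/\alpha,1/2)+\epsilon}$ and $B=\bigcap_i\{X_i\leq x\}$, the union bound gives $\Pr[\bar B]\leq nVx^{-\alpha}=Vn^{1-\max(1,\alpha/2)-\alpha\epsilon}$, which is exactly the first summand of the theorem. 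On $B$, applying Markov to $S_n-\E S_n$ with $\phi(s)=e^{\mu s}$ reduces the task to bounding $\prod_i e^{-\mu\E X_i}R_i(\mu,x)$, where $R_i(\mu,x)=\sum_{k\leq x}e^{\mu k}\Pr[X_i=k]$ is the same quantity analyzed in Section~3.

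I would then decompose $R_i=I_{i,1}+I_{i,2}+I_{i,3}$ with $M=2\alpha/\mu$ as before. Lemma~\ref{lemma:Ii3} already produces the correct bound $O(\mu^\alpha)+Ve^{\mu x}x^{-\alpha}$ for $I_{i,3}$ at every $\alpha>0$, so only $I_{i,1}$ and $I_{i,2}$ need fresh work. For $I_{i,2}$, write $e^{\mu k}=1+\mu k+(e^{\mu k}-1-\mu k)$ and use $e^{\mu k}-1-\mu k\leq\tfrac{(\mu k)^2}{2}e^{\mu k}$ together with summation by parts (Lemma~\ref{lemma:summation-by-parts}) against $F_{X_i+}(k)\leq Vk^{-\alpha}$; this reduces the quadratic remainder to an integral $\mu^2\int_1^M t^{1-\alpha}\diff t$, which evaluates to $O(\mu^\alpha)$ for $1<\alpha<2$, to $O(\mu^2\ln(1/\mu))$ for $\alpha=2$, and to $O(\mu^2)$ for $\alpha>2$, i.e.\ $O(\mu^{\min(\alpha,2)})$ up to a logarithmic factor. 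The same expansion applied to $I_{i,1}$ using the left-tail bound $F_{X_i-}(k)\leq Wk^{-\alpha}$ yields symmetric contributions. The constant parts of $I_{i,1}$ and $I_{i,2}$ then sum to $\Pr[X_i\leq M]=1-O(VM^{-\alpha})$, while the linear-in-$\mu$ parts sum to $\mu\,\E[X_i\one[X_i\leq M]]=\mu\E X_i+O(\mu Vx^{1-\alpha})$; since $\alpha>1$, the latter is absorbed by the prefactor $e^{-\mu\E X_i}$ up to negligible error.

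Collecting everything gives $e^{-\mu\E X_i}R_i(\mu,x)\leq 1+O(\mu^{\min(\alpha,2)})+Ve^{\mu x}x^{-\alpha}$; applying $1+y\leq e^y$ yields the exponent $-\mu x+n\cdot O(\mu^{\min(\alpha,2)})+Vne^{\mu x}x^{-\alpha}$. The choice $\mu=\tfrac{1}{x}\ln(x^\alpha/n)$ from Lemmas~\ref{lemma:W_1} and~\ref{lemma:W_2} still collapses the last two terms to $V-\alpha\epsilon\ln n$, and the remaining check $n\mu^{\min(\alpha,2)}=o(1)$ is precisely why we chose $x=n^{\max(1/\alpha,1/2)+\epsilon}$: for $1<\alpha\leq 2$ it becomes $n\mu^\alpha\asymp n^{-\alpha\epsilon}(\ln n)^\alpha$, and for $\alpha>2$ it becomes $n\mu^2\asymp n^{-2\epsilon}(\ln n)^2$. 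In both regimes the exponent is at most $2V-\alpha\epsilon\ln n$, producing the $e^{2V}n^{-\alpha\epsilon}$ summand, and the theorem follows. The main obstacle I expect is the $I_{i,2}$ analysis: extracting the correct exponent $\min(\alpha,2)$ uniformly across the three regimes $\alpha<2$, $\alpha=2$, $\alpha>2$ while tracking that the first-order cancellation against $e^{-\mu\E X_i}$ incurs only lower-order errors; once this is done, the rest of the argument is a careful replay of the Section~3 calculations.
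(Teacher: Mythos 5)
Your proposal follows the paper's proof essentially step for step: the same truncation event $B$, the same exponential Markov inequality applied to the nearly-centered truncated sum, the same three-interval decomposition of $\sum_{k\leq x}e^{\mu k}\Pr[X_i=k]$ at $M=2\alpha/\mu$, the same second-order Taylor split $e^{\mu k}=(1+\mu k)+(e^{\mu k}-1-\mu k)$ to peel off $\mu\E X_i$ against the $e^{-\mu\E X_i}$ prefactor and expose the $O(\mu^{\min(\alpha,2)})$ remainder (with the extra $\ln(1/\mu)$ at $\alpha=2$), and the same choice $\mu=\frac1x\ln(x^\alpha/n)$ with $x=n^{\max(1/\alpha,1/2)+\epsilon}$ to collapse the exponent to $V-\alpha\epsilon\ln n+o(1)$. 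The only cosmetic difference is in $I_{i,2}$, where you bound $e^{\mu k}-1-\mu k\leq\tfrac{(\mu k)^2}{2}e^{\mu k}$ before summing by parts while the paper sums by parts against $\phi(k)=e^{\mu k}-1-\mu k$ directly, but both routes reduce to the same integral $\mu^2\int_1^M t^{1-\alpha}\diff t$.
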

\begin{proof}
The basic idea of the proof is similar to that of Theorem~\ref{thr:bound_1}, differing only in some details.
First, from Lemma~\ref{lemma-expectation}, it follows that $\left|\E X_i\right| < \infty$, and so
$$
\E S_n = \E \sum_{i=1}^{n}X_i=\sum_{i=1}^{n}\E X_i
$$ 
is finite as well.
Next, since each $X_i \sim \mathbb{D}\left(\alpha,\alpha\right)$ and $\alpha>1$, then, obviously, 
$$
F_{X_i+}(k)\leq V \, k^{-\alpha} \quad \text{ and }\quad F_{X_i-}(k)\leq W\, k^{-\alpha} 
$$
for any $k \geq 1$. 

Also we introduce a set of events $B_{i}:= \{X_{i} \leq x \}$ and 
$$
B:=\bigcap_{i=1}^{n}B_{i}.
$$
Then, likewise to the previous proof, we have
\begin{align*}
\Pr[S_{n} \geq \E S_n + x] & = \Pr[\{S_n \geq \E S_n + x\} \wedge \bar{B}] + \Pr[\{S_{n} \geq \E S_n + x\} \wedge B] \\
                           & \leq \Pr[\bar{B}] + \Pr[\{S_{n} \geq \E S_n + x\} \wedge B] \\
                           & \leq \Pr[\bar{B}] + \Pr\left[ S_{n} \geq \E S_n + x \: | \: B\right]\, \cdot \Pr\left[B\right].
\end{align*}

Since event $B$ is the intersection of independent events $\{ X_i \leq x \}$, we have 
$$
\Pr[B]=\prod_{i=1}^{n}\Pr[X_i \leq x];
$$ 
and to bound the probability of event $\bar{B}$ we apply Union bound, like we did in~\eqref{eq:pr_not_b}:
$$
\Pr[\bar{B}] \leq nVx^{-\alpha}.
$$ 

Hence, the above inequality of the probability $\Pr[S_n \geq \E S_n + x]$ can be further simplified
\begin{align*}
\Pr[S_{n} \geq \E S_n + x] & \leq \Pr[\bar{B}] + \Pr\left[ S_{n} \geq \E S_n + x \: | \: B\right]\, \cdot \Pr\left[B\right]\\
 & \leq nVx^{-\alpha} + \Pr\left[ S_{n} \geq \E S_n + x \: | \: B\right]\, \cdot \prod_{i=1}^{n}\Pr[X_i \leq x].
\end{align*}

Next, consider the sum $S_n$ in probability $\Pr\left[ S_{n} \geq \E S_n + x \: | \: B\right]$. This sum consists of $n$ independent random variables having conditionally the same distribution as the original variable $X_i$ but truncated at $x$. Formally speaking, let's introduce new ``nearly-centered''\footnote{The new variable is not fully centered, since $\E Z_i < \E X_i$} truncated random variables $Z_i$ with the following probability distribution function
\begin{align}
\Pr[Z_i = k - \E X_i] = \begin{cases}
\frac{\Pr[X_i = k]}{\Pr[X_i \leq x] }, & \text{when } k \leq x\\
0, & \text{otherwise.} 
\end{cases}\label{eq:z_i}
\end{align}
Then
$$
\Pr[S_{n} \geq \E S_n + x\:|\:B] = \Pr\left[\sum_{i=1}^{n}Z_i \geq x\right] =: \Pr\left[S_{n}^{ \langle x \rangle} \geq  x\right], \label{eq:truncated_sum}
$$  
where $Z_i$'s are the truncated versions of the respective r.v. $X_i$'s, and $S_{n}^{ \langle x \rangle}=\sum_{i=1}^{n}Z_i$. So now we have
\begin{align}
\Pr[S_{n} \geq \E S_n + x] & \leq nVx^{-\alpha} + \Pr\left[ S_{n} \geq \E S_n + x \: | \: B\right]\, \cdot \prod_{i=1}^{n}\Pr[X_i \leq x] \nonumber \\
& =  nVx^{-\alpha} + \Pr\left[S_{n}^{ \langle x \rangle} \geq  x\right]\, \cdot \prod_{i=1}^{n}\Pr[X_i \leq x].\label{eq:final_inequality_1}
\end{align}
As we did before, we apply generalized Markov's inequality to the probability of the sum of truncated variables to obtain the following inequality
\begin{align*}
\Pr\left[S_{n}^{ \langle x \rangle} \geq  x\right] & \leq \frac{\E e^{\mu S_{n}^{ \langle x \rangle}}}{e^{\mu x}}, 
\end{align*}
which holds for any  $\mu \geq 0$, however, in what follows, we will require that $\mu \rightarrow 0$ when $n \rightarrow \infty$ (moreover, we will assume that $n$ is large enough for $\mu < 1$). Hence,
\begin{align*}
\Pr\left[S_{n}^{ \langle x \rangle} \geq  x\right] & \leq \frac{\E e^{\mu S_{n}^{ \langle x \rangle}}}{e^{\mu x}}\\
& = e^{-\mu x}\E e^{\mu \sum_{i=1}^{n}Z_i  }\\
& = e^{-\mu x}\prod_{i=1}^{n}\E e^{\mu Z_i}, \text{ since all $Z_i$'s are independent}\\
& = e^{-\mu x}\prod_{i=1}^{n}\sum_{k \leq x}e^{\mu (k-\E X_i)}\Pr[Z_i=k-\E X_i].
\end{align*}
Next, we use the definition of the variable $Z_i$ from~\eqref{eq:z_i}
\begin{align*}
\Pr\left[S_{n}^{ \langle x \rangle} \geq  x\right] & \leq e^{-\mu x}\prod_{i=1}^{n}\sum_{k \leq x}e^{\mu (k-\E X_i)}\Pr[Z_i=k-\E X_i]\\
& \leq e^{-\mu x}\prod_{i=1}^{n}\sum_{k \leq x}e^{\mu (k-\E X_i)}\frac{\Pr\left[X_i=k\right]}{\Pr\left[X_i \leq x\right]}\\
& = e^{-\mu x}\frac{\prod_{i=1}^{n}\sum_{k \leq x}e^{\mu (k-\E X_i)}\Pr\left[X_i=k\right]}{\prod_{i=1}^{n}\Pr[X_i \leq x]}   
\end{align*}

Thus, we can substitute $\Pr[S_{n}^{ \langle x \rangle} \geq  x]$ in~\eqref{eq:final_inequality_1} with the above calculated inequality to obtain
\begin{align}
\Pr[S_{n} \geq \E S_n + x] & \leq nVx^{-\alpha} + \Pr\left[S_{n}^{ \langle x \rangle} \geq  x\right]\, \cdot \prod_{i=1}^{n}\Pr[X_i \leq x]\nonumber\\
& \leq nVx^{-\alpha} + e^{-\mu x}\frac{\prod_{i=1}^{n}\sum_{k\leq x}e^{\mu (k-\E X_i)}\Pr\left[X_i=k\right]}{\prod_{i=1}^{n}\Pr[X_i \leq x]}\, \cdot \prod_{i=1}^{n}\Pr[X_i \leq x]\nonumber\\
& = nVx^{-\alpha} + e^{-\mu x}\prod_{i=1}^{n}\sum_{k \leq x}e^{\mu (k-\E X_i)}\Pr\left[X_i=k\right]\nonumber\\
& = nVx^{-\alpha} + e^{-\mu x}\prod_{i=1}^{n}e^{-\mu\E X_i}\sum_{k \leq x}e^{\mu k}\Pr\left[X_i=k\right]\nonumber\\
& =: nVx^{-\alpha} + e^{-\mu x}\prod_{i=1}^{n}e^{-\mu\E X_i}P_i(\mu,x). \label{eq:almost_final}
\end{align}
where 
\begin{align*}
P_i(\mu, x)& :=\sum_{k \leq x}e^{\mu k}\Pr\left[X_i=k\right].
\end{align*}
To obtain a bound on $P_i(\mu,x)$, we split its summation into 3 disjoint intervals, and bound each interval separately
\begin{align}
P_i(\mu, x) 	& = \sum_{k \leq x}e^{\mu k}\Pr\left[X_i=k\right] \nonumber\\
			& \leq \sum_{k \leq 0}e^{\mu k}\Pr\left[X_i=k\right]  + \sum_{k=1}^{\lfloor M \rfloor}e^{\mu k}\Pr\left[X_i=k\right]
							+ \sum_{k=\lceil M \rceil}^{x}e^{\mu k}\Pr\left[X_i=k\right] \nonumber \\
			& =:  J_{i,1} + J_{i,2} + J_{i,3}, \label{eq:R}
\end{align}
where $M=\frac{2\alpha}{\mu}$ (again, since we would like to have $0 \ll M < x$, we require $\mu x \rightarrow \infty$), and the next three sections identify bounds for each interval, and the fourth section assembles upper bounds of $J_{i,1}, J_{i,2}$, and $J_{i,3}$ to provide a bound for the probability $\Pr[S_n - \E S_n \geq x]$. 

\subsection{Upper bound of $J_{i,1}$, when $\alpha>1$}

Observe, that $P_i(\mu, x)$ is multiplied by $e^{-\mu\E X_i}$ term in~\eqref{eq:almost_final}. When $\E X_i \geq 0$, this shouldn't cause any troubles, however, if $\E X_i<0$, then clearly $e^{-\mu\E X_i}=1+\epsilon'$ with $\epsilon'>0$. Taking into account that $e^{-\mu\E X_i}P_i(\mu, x)$ stands under product operator, this extra $(1+\epsilon')$ term, when raised to the power of $n\rightarrow\infty$ may cause troubles if not dealt with properly. 

So our goal, beside obtaining bounds of $P_i(\mu, x)$ in terms of tail functions, is to ''extract`` and include into the bound a term that will eventually diminish the $e^{-\mu\E X_i}$ multiplier. 

The next lemma demonstrates how we achieve this goal by bounding the $J_{i,1}$ interval with the tail functions \textit{and} partial expectation of $X_i$ (the other part of $\E X_i$ will be included in the second interval). 

\begin{lemma}
\label{lemma:ji1}
  Let $X_i \sim \mathbb{D}(\alpha,\alpha)$ be an integer valued r.v. with $\alpha>1$. Then the term
  $$
  J_{i,1} = \sum_{k \leq 0}e^{\mu k}\Pr[X_i=k],
  $$ 
  where $0 < \mu < 1$ is bounded from above by
$$
J_{i,1} \leq \Pr[X_i \leq 0]+\mu\sum_{k \leq 0}k\Pr[X_i=k] + \begin{cases}
O\left(\mu^{\alpha}\right) , &\text{ when } 1 < \alpha < 2,\\
O\left(-\mu^2\ln \mu\right), & \text{ when } \alpha=2,\\
O\left(\mu^2\right),& \text{ when } \alpha > 2.
\end{cases}
$$
\end{lemma}
\begin{proof}
We have

\begin{align}
J_{i,1} & = \sum_{k \leq 0}e^{\mu k}\Pr[X_i=k] \nonumber \\
		& = \sum_{k \leq 0}\Big((1+\mu k) + e^{\mu k}-(1+\mu k)\Big)\Pr[X_i=k] \nonumber \\
		& = \Pr[X_i \leq 0]+\mu\sum_{k \leq 0}k\Pr[X_i=k] + \sum_{k \leq 0}\Big(e^{\mu k}-1-\mu k\Big)\Pr[X_i=k] \label{eq:j_1}. 
\end{align}

Consider the rightmost sum. First, denote by $\phi(k)$ the function 
\begin{align}
\phi(k) := e^{\mu k}-1-\mu k, \label{eq:phi}
\end{align}
and then apply summation by parts (Lemma~\ref{lemma:summation-by-parts})  
\begin{align*}
\sum_{k \leq 0}\Big(e^{\mu k}-1-\mu k\Big)\Pr[X_i=k] & = \phi(0)\Pr[X_i \leq 0]+ \sum_{k \leq -1}\Big(\phi(k)-\phi(k+1)\Big)\Pr[X_i \leq k]\\
& =  \sum_{k \geq 1}\Big(\phi(-k)-\phi(1-k)\Big)\Pr[X_i \leq -k], \text{ since } \phi(0)=0.
\end{align*}
Next, for every $X_i$ and $k \geq 1$ we have 
$$
\Pr\left[X_i \leq -k\right]=F_{X_i-}(k) \leq Wk^{-\alpha}, 
$$
and, therefore,
\begin{align*}
\sum_{k \leq 0}\Big(e^{\mu k}-1-\mu k\Big)\Pr[X_i=k] & = \sum_{k \geq 1}\Big(\phi(-k)-\phi(1-k)\Big)\Pr[X_i \leq -k]\\
& \leq W\sum_{k \geq 1}\Big(\phi(-k)-\phi(1-k)\Big)k^{-\alpha}.
\end{align*}

After substituting back the function we denoted by $\phi$~\eqref{eq:phi}, we obtain that
\begin{align*}
\phi(-k)-\phi(1-k) & = e^{-\mu k}-1+\mu k - e^{\mu (1-k)}+1+\mu(1-k)\\
& = e^{-\mu k}\left(1 - e^{\mu}\right)+\mu\\
& \leq -\mu e^{-\mu k}+\mu, \quad \text{ since } e^{\mu} \geq 1 + \mu\\
& = \mu\left(1 - e^{-\mu k}\right).
\end{align*}

Thus, 
\begin{align*}
\sum_{k \leq 0}\Big(e^{\mu k}-1-\mu k\Big)\Pr[X_i=k] & \leq W\sum_{k\geq 1}\Big(\phi(-k)-\phi(1-k)\Big)k^{-\alpha} \nonumber \\
& \leq W\mu\sum_{k \geq 1}\left(1-e^{-\mu k}\right)k^{-\alpha} \nonumber \\
& \leq W\mu\left(  \sum_{k = 1}^{\lfloor 1/\mu \rfloor}(1-e^{-\mu k})k^{-\alpha} + \sum_{k = \lceil 1/\mu \rceil}^{\infty}(1-e^{-\mu k})k^{-\alpha} \right). \nonumber \\
& \leq W\mu\left(  \sum_{k = 1}^{\lfloor 1/\mu \rfloor}(1-e^{-\mu k})k^{-\alpha} + \sum_{k = \lceil 1/\mu \rceil}^{\infty}k^{-\alpha}\right) \nonumber \\
& \leq W\mu\left(  \mu \sum_{k = 1}^{\lfloor 1/\mu \rfloor}k^{1-\alpha} + \sum_{k = \lceil 1/\mu \rceil}^{\infty}k^{-\alpha}\right), \text{ since } e^{-\mu k} \geq 1 - \mu k. \nonumber
\end{align*}

Now we have two sums of strictly decreasing functions, which we bound using the integral upper bound of summation:

\begin{align*}
\sum_{k \leq 0}\Big(e^{\mu k}-1-\mu k\Big)\Pr[X_i=k] & \leq W\mu\left(  \mu \sum_{k = 1}^{\lfloor 1/\mu \rfloor}k^{1-\alpha} + \sum_{k = \lceil 1/\mu \rceil}^{\infty}k^{-\alpha}\right) \nonumber \\
& \leq W\mu\left(  \mu + \mu \sum_{k = 2}^{\lfloor 1/\mu \rfloor}k^{1-\alpha} + \left(\lceil 1/\mu \rceil\right)^{-\alpha} + \sum_{k = \lceil 1/\mu \rceil+1}^{\infty}k^{-\alpha}\right) \nonumber \\
& \leq W\mu\left(  \mu + \mu\int\limits_{1}^{1/\mu}x^{1-\alpha} \diff x + \mu^{\alpha} + \int\limits_{ 1/\mu }^{\infty}x^{-\alpha} \diff x \right) \nonumber \\
& \leq W\left(  \mu^2  + \mu^{\alpha+1} + \frac{1}{\alpha-1}\mu^{\alpha} +\mu^2\int\limits_{1}^{1/\mu}x^{1-\alpha} \diff x \right) \nonumber \\
& =: W\left(  \mu^2  + \mu^{\alpha+1} + \frac{1}{\alpha-1}\mu^{\alpha} +\sigma(\alpha, \, \mu) \right), \nonumber
\end{align*}
where we define $\sigma(\alpha, \, \mu) = \mu^2\int\limits_{1}^{1/\mu}x^{1-\alpha} \diff x$, which is upper bounded by
\begin{align*}
\sigma(\alpha, \, \mu) & = \mu^2\int\limits_{1}^{1/\mu}x^{1-\alpha} \diff x\\
& = \mu^2\begin{cases}
\frac{\mu^{\alpha-2}}{2-\alpha}-\frac{1}{2-\alpha}, &\text{ when } 1 < \alpha < 2,\\
-\ln \mu, & \text{ when } \alpha=2,\\
\frac{1}{\alpha-2}-\frac{\mu^{\alpha-2}}{\alpha-2},& \text{ when } \alpha > 2
\end{cases} \\
& = \begin{cases}
O\left(\mu^{\alpha}\right), &\text{ when } 1 < \alpha < 2,\\
O\left(-\mu^2\ln \mu\right), & \text{ when } \alpha=2,\\
O(\mu^2),& \text{ when } \alpha > 2.
\end{cases}
\end{align*}

Thus, we have that
\begin{align*}
\sum_{k \leq 0}\Big(e^{\mu k}-1-\mu k\Big)\Pr[X_i=k] & \leq W\left(  \mu^2  + \mu^{\alpha+1} + \frac{1}{\alpha-1}\mu^{\alpha} +\sigma(\alpha, \, \mu) \right) \nonumber \\
& \leq W\begin{cases}
\mu^2  + \mu^{\alpha+1} + \frac{1}{\alpha-1}\mu^{\alpha} +O\left(\mu^{\alpha}\right), &\text{ when } 1 < \alpha < 2,\\
2\mu^2  + \mu^{3} +O\left(-\mu^2\ln \mu\right), & \text{ when } \alpha=2,\\
\mu^2  + \mu^{\alpha+1} + \frac{1}{\alpha-1}\mu^{\alpha} + O\left(\mu^2\right),& \text{ when } \alpha > 2.
\end{cases} \nonumber \\
& = \begin{cases}
O\left(\mu^{\alpha}\right), &\text{ when } 1 < \alpha < 2,\\
O\left(-\mu^2\ln \mu\right), & \text{ when } \alpha=2,\\
O(\mu^2),& \text{ when } \alpha > 2,
\end{cases}
\end{align*}
since $\mu < 1$; therefore, the first interval $J_{i,1}$~\eqref{eq:j_1} is upper bounded by
\begin{align*}
J_{i,1} & \leq \Pr[X_i \leq 0]+\mu\sum_{k \leq 0}k\Pr[X_i=k] + \sum_{k \leq 0}\Big(e^{\mu k}-1-\mu k\Big)\Pr[X_i=k]\\
& \leq \Pr[X_i \leq 0]+\mu\sum_{k \leq 0}k\Pr[X_i=k] + \begin{cases}
O\left(\mu^{\alpha}\right), &\text{ when } 1 < \alpha < 2,\\
O\left(-\mu^2\ln \mu\right), & \text{ when } \alpha=2,\\
O(\mu^2),& \text{ when } \alpha > 2,
\end{cases} 
\end{align*}
and the lemma follows.
\end{proof}

\subsection{Upper bound of $J_{i,2}$, when $\alpha>1$}

The basic idea of the proof in this section resembles the one presented in Lemma~\ref{lemma:ji1}. First and foremost, we add and subtract the $(1+\mu k)$ term to obtain the positive part of the $\E X_i$, and after that we apply summation by parts followed with simple integral bound.

\begin{lemma}
\label{lemma:ji2}
 Let $X_i \sim \mathbb{D}(\alpha,\alpha)$ be an integer valued r.v. with $\alpha > 1$. When $M=\frac{2\alpha}{\mu}$ and $0<\mu<1$, then the term
 $$
 J_{i,2} = \sum_{k=1}^{\lfloor M \rfloor}e^{\mu k}\Pr[X_i=k], 
 $$ 
is bounded from above by
\begin{align*}
J_{i,2} \leq \Pr[X_i \geq 1] + \mu\sum_{k=1}^{\infty}k\Pr[X_i=k] +   \begin{cases}
O\left(\mu^{\alpha}\right), & \text{when } 1 < \alpha < 2, \\
O\left(- \mu^2\ln \mu\right), & \text{when } \alpha = 2,\\
O\left(\mu^{2}\right), & \text{when } \alpha > 2.
\end{cases}
\end{align*}
\end{lemma}
\begin{proof}
For $J_{i,2}$ we have 
\begin{align}
J_{i,2} 	& = \sum_{k=1}^{\lfloor M \rfloor}e^{\mu k}\Pr[X_i=k] \nonumber \\
		& = \sum_{k=1}^{\lfloor M \rfloor}\Big(1 + \mu k+ e^{\mu k} - 1 - \mu k\Big)\Pr[X_i=k] \nonumber  \\
		& = \sum_{k=1}^{\lfloor M \rfloor}\Pr[X_i=k] + \mu\sum_{k=1}^{\lfloor M \rfloor}k\Pr[X_i=k] + \sum_{k=1}^{\lfloor M \rfloor}\Big(e^{\mu k} - 1 - \mu k\Big)\Pr[X_i=k] \nonumber  \\
		& \leq \Pr[X_i \geq 1] + \mu\sum_{k=1}^{\infty}k\Pr[X_i=k] + \sum_{k=1}^{\lfloor M \rfloor}\Big(e^{\mu k} - 1 - \mu k\Big)\Pr[X_i=k] \nonumber \\
		& =: \Pr[X_i \geq 1] + \mu\sum_{k=1}^{\infty}k\Pr[X_i=k] + J_{i,2}^{0} \label{eq:j_i_2},
\end{align} 
where $J_{i,2}^{0}=\sum_{k=1}^{\lfloor M \rfloor}\Big(e^{\mu k} - 1 - \mu k\Big)\Pr[X_i=k]$. First we denote 
 $$
 \phi(k):=e^{\mu k}-1-\mu k,
 $$
 and after applying summation by parts (Lemma~\ref{lemma:summation-by-parts}), we obtain
\begin{align*}
J_{i,2}^{0} &= \sum_{k=1}^{\lfloor M \rfloor}\Big(e^{\mu k} - 1 - \mu k\Big)\Pr[X_i=k] \\
& = \sum_{k=1}^{\lfloor M \rfloor}\phi(k)\Pr[X_i=k] \\
& = \sum_{k=0}^{\lfloor M \rfloor}\phi(k)\Pr[X_i=k], \quad  \text{ since } \phi(0)=0\\
& = \phi(0)\Pr\left[X_i \geq 0\right] - \phi(\lfloor M \rfloor)\Pr\left[X_i \geq \lfloor M \rfloor\right] + \sum_{k=2}^{\lfloor M \rfloor}\Big(\phi(k) - \phi(k-1)  )\Pr[X_i \geq k]\\
& \leq \sum_{k=1}^{\lfloor M \rfloor}\Big(\phi(k) - \phi(k-1)  )\Pr[X_i \geq k].
\end{align*}			
Now, since $\phi(k)=e^{\mu k} - 1 - \mu k$, we further have
\begin{align*}
J_{i,2}^{0}	& \leq \sum_{k=1}^{\lfloor M \rfloor}\Big(\phi(k)-\phi(k-1)\Big)\Pr\left[X_i \geq k\right] \\
& = \sum_{k=1}^{\lfloor M \rfloor}\Big(e^{\mu k}(1-e^{-\mu})-\mu\Big)\Pr\left[X_i \geq k\right]\\
& \leq \sum_{k=1}^{\lfloor M \rfloor}\Big(\mu e^{\mu k}-\mu\Big)\Pr\left[X_i \geq k\right], \text{ since } e^{-\mu} \geq 1-\mu\\
& \leq \mu\sum_{k=1}^{\lfloor M \rfloor}\Big( e^{\mu k}-1\Big)\Pr\left[X_i \geq k\right].
\end{align*} 
Next, from the definition of the right-tail function, we have
$$
\Pr[X_i \geq k]=: F_{X_i+}(k) \leq Vk^{-\alpha} \text{ for every } k \geq 1.
$$
Hence,
\begin{align*}
J_{i,2}^{0} & \leq \mu\sum_{k=1}^{\lfloor M \rfloor}\Big( e^{\mu k}-1\Big)\Pr\left[X_i \geq k\right]\\
& \leq V\mu\sum_{k=1}^{\lfloor M \rfloor}\Big( e^{\mu k}-1\Big)k^{-\alpha}\\
& = V\mu\Big( e^{\mu}-1\Big) + V\mu\sum_{k=1}^{\lfloor M \rfloor}\Big( e^{\mu k}-1\Big)k^{-\alpha}.
\end{align*}

Now, since $0<\mu < 1$, we have that $e^{\mu} \leq 1 +2\mu$, and so
\begin{align*}
J_{i,2}^{0} & \leq V\mu\Big( e^{\mu}-1\Big) + V\mu\sum_{k=1}^{\lfloor M \rfloor}\Big( e^{\mu k}-1\Big)k^{-\alpha}\\
& \leq 2V\mu^2 + V\mu\sum_{k=1}^{\lfloor M \rfloor}\Big( e^{\mu k}-1\Big)k^{-\alpha}.
\end{align*}

Furthermore, observe that the function $f(k)=e^{\mu k} - 1$ is convex, hence, we can bound it from above with a straight line $l(k)=\frac{k}{M}(e^{2\alpha}-1)$ when $k \in [0...M]$. Therefore,

\begin{align*}
J_{i,2}^{0} & \leq  2V\mu^2 + V\mu\sum_{k=1}^{\lfloor M \rfloor}\Big( e^{\mu k}-1\Big)k^{-\alpha}\\
& \leq 2V\mu^2 + V\mu\frac{e^{2\alpha}-1}{M}\sum_{k=1}^{\lfloor M \rfloor}k^{1-\alpha}\\
& \leq 2V\mu^2 + V\mu^2\frac{e^{2\alpha}-1}{2\alpha}\sum_{k=1}^{\lfloor M \rfloor}k^{1-\alpha}, \quad \text{ since } M=\frac{2\alpha}{\mu}\\
& \leq 2V\mu^2 + Ve^{2\alpha}\mu^2\sum_{k=1}^{\lfloor M \rfloor}k^{1-\alpha}.
\end{align*}

Next, we apply the integral upper bound of summation of a decreasing function~\eqref{eq:integral-bound}. So for $\alpha > 1$ we have
\begin{align*}
J_{i,2}^{0} & \leq 2V\mu^2 + Ve^{2\alpha}\mu^2\sum_{k=1}^{\lfloor M \rfloor}k^{1-\alpha}\\
& = 2V\mu^2 + Ve^{2\alpha}\mu^2 + Ve^{2\alpha}\mu^2\sum_{k=2}^{\lfloor M \rfloor}k^{1-\alpha}\\
& \leq  O\left(\mu^2\right) + Ve^{2\alpha}\mu^2 \int\limits_{1}^{ M }x^{1-\alpha} \diff x\\
& =  O\left(\mu^2\right) +  Ve^{2\alpha}\mu^2 \begin{cases}
\frac{M^{2-\alpha}}{2-\alpha}-\frac{1}{2-\alpha}, & \text{ when } 1 < \alpha < 2,\\
\ln M, & \text{ when } \alpha=2,\\
\frac{1}{\alpha-2}-\frac{M^{2-\alpha}}{\alpha-2}, &\text{ when } \alpha>2.
\end{cases} 
\end{align*}
Recall that $M=2\alpha/\mu$. Then
\begin{align*}
J_{i,2}^{0} & \leq  O\left(\mu^2\right) +  Ve^{2\alpha}\mu^2 \begin{cases}
\frac{M^{2-\alpha}}{2-\alpha}-\frac{1}{2-\alpha}, & \text{ when } 1 < \alpha < 2,\\
\ln M, & \text{ when } \alpha=2,\\
\frac{1}{\alpha-2}-\frac{M^{2-\alpha}}{\alpha-2}, &\text{ when } \alpha>2.
\end{cases}\\
& = \begin{cases}
O\left(\mu^{\alpha}\right), & \text{ when } 1 < \alpha < 2,\\
O\left(-\mu^2\ln \mu\right), & \text{ when } \alpha=2,\\
O\left(\mu^2\right), &\text{ when } \alpha>2,
\end{cases}
\end{align*}
where the last relation follows from the fact that $0 < \mu < 1$.

Thus, from~\eqref{eq:j_i_2}, it follows that the second interval is upper bounded by
\begin{align*}
J_{i,2} & \leq \Pr[X_i \geq 1] + \mu\sum_{k=1}^{\infty}k\Pr[X_i=k] + J_{i,2}^{0}\\
& = \Pr[X_i \geq 1] + \mu\sum_{k=1}^{\infty}k\Pr[X_i=k] + \begin{cases}
O\left(\mu^{\alpha}\right), & \text{ when } 1 < \alpha < 2,\\
O\left(-\mu^2\ln \mu\right), & \text{ when } \alpha=2,\\
O\left(\mu^2\right), &\text{ when } \alpha>2,
\end{cases}
\end{align*}
which proves the lemma.
\end{proof}

\subsection{Upper bound of $J_{i,3}$, when $\alpha > 1$}

This interval is the easiest to deal with. Note that $J_{i,3}$ is identical to the $I_{i,2}$ term from Lemma~\ref{lemma:Ii2}, and thus, we can re-use its result to bound the interval.

\begin{lemma}
\label{lemma:ji3}
  Let $X_i \sim \mathbb{D}(\alpha,\alpha)$ be an integer valued r.v. with $\alpha > 1$. When  $M=\frac{2\alpha}{\mu}$ and $0<\mu<1$, then the term
 $$
 J_{i,3} = \sum_{k=\lceil M \rceil}^{x}e^{\mu k}\Pr[X_i=k], 
 $$ 
 is bounded from above by
$$
J_{i,3} \leq O\left(\mu^{\alpha}\right) +  Ve^{\mu x}x^{-\alpha}. 
$$
\end{lemma}
\begin{proof}
 Proof follows from Lemma~\ref{lemma:Ii3}. 
\end{proof}

\subsection{Final assembling steps, when $\alpha > 1$}
Recall, that our goal was to bound $P_i(\mu,x) \leq J_{i,1}+J_{i,2}+J_{i,3}$ in~\eqref{eq:almost_final}. Thus, after collecting results of Lemmas~\ref{lemma:ji1},~\ref{lemma:ji2}, and~\ref{lemma:ji3}:
\begin{align*}
J_{i,1} & \leq \Pr[X_i \leq 0]+\mu\sum_{k \leq 0}k\Pr[X_i=k] + \begin{cases}
O\left(\mu^{\alpha}\right) , &\text{ when } 1 < \alpha < 2,\\
O\left(-\mu^2\ln \mu\right), & \text{ when } \alpha=2,\\
O\left(\mu^2\right),& \text{ when } \alpha > 2,
\end{cases},\\
J_{i,2} & \leq \Pr[X_i \geq 1] + \mu\sum_{k=1}^{\infty}k\Pr[X_i=k] +   \begin{cases}
O\left(\mu^{\alpha}\right), & \text{when } 1 < \alpha < 2, \\
O\left(- \mu^2\ln \mu\right), & \text{when } \alpha = 2,\\
O\left(\mu^{2}\right), & \text{when } \alpha > 2
\end{cases},\\
J_{i,3} & \leq O\left(\mu^{\alpha}\right) +  Ve^{\mu x}x^{-\alpha},
\end{align*}
we  obtain
\begin{align*}
P_i(\mu,x) 	& \leq J_{i,1} + J_{i,2} + J_{i,3} \\
			& = \sum_{k}\Pr[X_i=k] + \mu \sum_{k}k\Pr[X_i=k] +Ve^{\mu x}x^{-\alpha}
			+\begin{cases}
			O\left(\mu^{\alpha}\right), & \text{when } 1 < \alpha < 2, \\
			O\left(- \mu^2\ln \mu\right), & \text{when } \alpha = 2,\\
			O\left(\mu^{2}\right), & \text{when } \alpha > 2
			\end{cases}\\
			& = 1 + \mu \E X_i +Ve^{\mu x}x^{-\alpha}
						+\begin{cases}
						O\left(\mu^{\alpha}\right), & \text{when } 1 < \alpha < 2, \\
						O\left(- \mu^2\ln \mu\right), & \text{when } \alpha = 2,\\
						O\left(\mu^{2}\right), & \text{when } \alpha > 2
						\end{cases}\\
			& =: 1 + \mu\E X_i + Ve^{\mu x}x^{-\alpha} + \mathcal{T}(\alpha, \, \mu),			
\end{align*}
where 
$$
\mathcal{T}(\alpha, \, \mu) = \begin{cases}
						O\left(\mu^{\alpha}\right), & \text{when } 1 < \alpha < 2, \\
						O\left(- \mu^2\ln \mu\right), & \text{when } \alpha = 2,\\
						O\left(\mu^{2}\right), & \text{when } \alpha > 2.
						\end{cases}
$$

Hence,  after applying the well-known relation $1+x \leq e^{x}$, we obtain
\begin{align}
P_i(\mu,x) & \leq 1+ \mu \E X_i +Ve^{\mu x}x^{-\alpha}+\mathcal{T}(\alpha,\,\mu)
 \leq \exp\Big(\mu \E X_i +Ve^{\mu x}x^{-\alpha}+\mathcal{T}(\alpha,\,\mu)\Big) \label{eq:bound_P}.
\end{align}
Now, let's recall inequality~\eqref{eq:almost_final} and denote its right-hand side by $\mathcal{K}(\mu, x)$, that is
\begin{align}
\Pr[S_{n} \geq \E S_n + x] & \leq nVx^{-\alpha_r} + e^{-\mu x}\prod_{i=1}^{n}e^{-\mu\E X_i}P_i(\mu,x) \nonumber \\
& =: nVx^{-\alpha} + \mathcal{K}(\mu,x), \label{eq:K} 
\end{align}
where $\mathcal{K}(\mu,x):=e^{-\mu x}\prod_{i=1}^{n}e^{-\mu\E X_i}P_i(\mu,x)$.

Next, after having obtained bound for  $P_i(\mu,x)$~\eqref{eq:bound_P}, we can simplify $\mathcal{K}(\mu,x)$
\begin{align}
\mathcal{K}(\mu,x) & =e^{-\mu x}\prod_{i=1}^{n}e^{-\mu\E X_i}P_i(\mu,x) \nonumber \\
& \leq e^{-\mu x}\prod_{i=1}^{n}e^{-\mu\E X_i}\exp\Big(\mu \E X_i +Ve^{\mu x}x^{-\alpha}+\mathcal{T}(\alpha,\,\mu)\Big) \nonumber \\
& = e^{-\mu x}\prod_{i=1}^{n}\exp\Big(Ve^{\mu x}x^{-\alpha}+\mathcal{T}(\alpha,\,\mu)\Big) \nonumber \\
& = e^{-\mu x}\exp\Big(Vne^{\mu x}x^{-\alpha}+n\mathcal{T}(\alpha,\,\mu)\Big) \nonumber \\
& = \exp\Big(-\mu x + Vne^{\mu x}x^{-\alpha}+n\mathcal{T}(\alpha,\,\mu)\Big). \label{eq:K-tmp}
\end{align}

Next, we need to fix the value of $\mu$ that minimizes the above exponent, while keeping $\mu \rightarrow 0$, but $\mu x \rightarrow \infty$. Similar to the case when $0 < \alpha_r \leq 1$, we take $\mu=\frac{1}{x}\ln\frac{x^{\alpha}}{n}$. 

The next lemma verifies that the chosen value of $\mu$ satisfies both constraints.

\begin{lemma}
Let $\mu = \frac{1}{x}\ln\frac{x^{\alpha}}{n}$ and $x=n^{\max(1/\alpha,\:1/2)+\epsilon}$ for $\epsilon>0$. Then $\mu \rightarrow 0$ and  $\mu x \rightarrow \infty$ when $n \rightarrow \infty$.	
\end{lemma}

\begin{proof}
First, let's check that $\mu \rightarrow 0$. We have for $\mu$
\begin{align*}
\mu & = \frac{1}{x}\ln\frac{x^{\alpha}}{n}\\
& =\frac{1}{n^{\max(1/\alpha,\:1/2)+\epsilon}}\ln\frac{n^{\alpha\max(1/\alpha,\:1/2)+\alpha\epsilon}}{n}\\
& \leq \frac{1}{n^{1/2}}\ln\frac{n^{\alpha/\min(\alpha,\:2)+\alpha\epsilon}}{n}.
\end{align*}	
However, $\frac{\alpha}{\min(\alpha,\:2)} \geq 1$, for any $\alpha>0$, and so $\frac{\alpha}{\min(\alpha,\:2)} =1+\delta$, where $\delta \geq 0$. Then
\begin{align*}
\mu & \leq \frac{1}{n^{1/2}}\ln\frac{n^{\alpha/\min(\alpha,\:2)+\alpha\epsilon}}{n}\\
& \leq \frac{1}{n^{1/2}}\ln\frac{n^{1+\delta+\alpha\epsilon}}{n}\\
& \leq \frac{1}{n^{1/2}}O(\ln n)\\
& = o(1),
\end{align*}
when $n \rightarrow \infty$. 

Now, in a similar way, we prove that $\mu x \rightarrow \infty$. From the defined values of $\mu$ and $x$, it follows that
\begin{align*}
\mu x & = \frac{x}{x}\ln\frac{x^{\alpha}}{n}\\
& = \ln\frac{x^{\alpha}}{n}\\
& = \ln\frac{n^{\alpha\max(1/\alpha,\:1/2)+\alpha\epsilon}}{n}\\
& = \ln\frac{n^{\alpha/\min(\alpha,\:2)+\alpha\epsilon}}{n}\\
& = \ln\frac{n^{1+\delta+\alpha\epsilon}}{n}, \quad \text{ since } \frac{\alpha}{\min(\alpha,\:2)}=1+\delta, \text{ where } \delta \geq 0\\
& \geq \ln n^{\delta+\alpha\epsilon}\\
& = \left(\delta + \alpha\epsilon\right)\ln n \rightarrow \infty, 
\end{align*}
when $n \rightarrow \infty$.
\end{proof}
Now, after fixing $\mu$, we are going to show that 
\begin{align}
\mathcal{K}(\mu,x) & \leq \exp\Big(n\mathcal{T}(\alpha,\,\mu) + \left(Vne^{\mu x}x^{-\alpha}-\mu x \right)\Big) \leq \exp\left(o(1)+V-\alpha\epsilon\ln n\right), 
\end{align}
when $n \rightarrow \infty$, by analyzing each term separately. And so we have

\begin{lemma}
\label{lemma:P1}
Let 
$$
\mathcal{T}(\alpha,\,\mu) = \begin{cases} 
O\left(\mu^{\alpha}\right), &\text{ when } 1<\alpha<2,\\
O\left(\mu^2\ln \mu\right), &\text{ when } \alpha=2,\\
O\left(\mu^{2}\right), &\text{ when } \alpha>2,
\end{cases}
$$ 
where $\mu=\frac{1}{x}\ln\frac{x^{\alpha}}{n}$, $x = n^{\max(1/\alpha,\:1/2)+\epsilon}$ and any $\epsilon>0$. Then
$$
n\mathcal{T}(\alpha,\,\mu) = o(1),
$$
when $n \rightarrow \infty$.
\end{lemma}
\begin{proof}
First, let us consider the case when $1 < \alpha < 2$. Then $x = n^{1/\alpha+\epsilon}$ and $\mathcal{T}(\alpha,\,\mu) = O\left(\mu^{\alpha}\right)$. Therefore, we have
\begin{align*}
n\mathcal{T}(\alpha,\,\mu) & = nO\left(\mu^{\alpha}\right)\\
& = O\left( \frac{n}{x^{\alpha}} \ln^{\alpha} \frac{x^{\alpha}}{n} \right)\\
& = O\left( \frac{n}{n^{1+\alpha\epsilon}} \ln^{\alpha} \frac{n^{1+\alpha\epsilon}}{n} \right)\\
& = O\left( n^{-\alpha\epsilon} \ln^{\alpha} n^{\alpha\epsilon} \right)\\
& = o(1).
\end{align*}
Next, when $\alpha=2$. Then $x = n^{1/2+\epsilon}$ and $\mathcal{T}(\alpha,\,\mu) = O\left(-\mu^2\ln \mu\right)$. Hence,
\begin{align*}
n\mathcal{T}(\alpha,\,\mu) & = O\left(-n\mu^2\ln \mu\right)\\
& = O\left(-\frac{n}{x^2}\ln^{2} \frac{x^2}{n}\cdot\ln \left(\frac{1}{x}\ln\frac{x^2}{n}\right) \right)\\
& = O\left(\frac{n}{n^{1+2\epsilon}}\ln^{2}\frac{n^{1+2\epsilon}}{n}\ln n^{\frac{1}{2}+\epsilon} - \frac{n}{n^{1+2\epsilon}}\ln^{2} \frac{n^{1+2\epsilon}}{n}\cdot\ln\ln\frac{n^{1+2\epsilon}}{n} \right)\\
& = O\left(n^{-2\epsilon}\ln^{2}n^{2\epsilon}\ln n -n^{-2\epsilon}\ln^{2} n^{2\epsilon}\cdot\ln \ln n^{2\epsilon}\ln n \right)\\
& = O\left(n^{-2\epsilon}\ln^{2} n^{2\epsilon}\ln n \right)\\
& = o(1).
\end{align*}
Finally, when $\alpha > 2$, then $x = n^{1/2+\epsilon}$, $\mathcal{T}(\alpha,\,\mu) = O\left(\mu^{2}\right)$, while
\begin{align*}
n\mathcal{T}(\alpha,\,\mu) & = O\left(n\mu^{2}\right)\\
& = O\left( \frac{n}{x^2}\ln^2\frac{x^2}{n} \right)\\
& = O\left( \frac{n}{n^{1+2\epsilon}}\ln^2\frac{n^{1+2\epsilon}}{n} \right)\\
& = O\left( n^{-2\epsilon}\ln^2 n^{2\epsilon} \right)\\
& = o(1).
\end{align*} 
And this completes the proof of the lemma.
\end{proof}

However, unlike the $n\mathcal{T}(\alpha,\,\mu)$ term in $\mathcal{K}(\mu,x)$~\eqref{eq:K-tmp}, the term $Vne^{\mu x}x^{-\alpha}-\mu x \rightarrow -\infty$, when $n \rightarrow \infty$, as the next lemma verifies this.
\begin{lemma}
\label{lemma:P4}
Let $\alpha  > 1$ and $x = n^{\max(1/\alpha,\: 1/2)+\epsilon}$. Then
$$
Vne^{\mu x}x^{-\alpha}-\mu x  \leq V - \alpha\epsilon\ln n.
$$
\end{lemma}

\begin{proof}
We have
\begin{align*}
Vne^{\mu x}x^{-\alpha}-\mu x & = Vne^{\frac{x}{x}\ln \frac{x^{\alpha}}{n}}x^{-\alpha}-\frac{x}{x}\ln \frac{x^{\alpha}}{n} \\
& = V - \ln \frac{x^{\alpha}}{n}.
\end{align*}

When $1<\alpha\leq 2$, then $x=n^{1/\alpha+\epsilon}$, and so
\begin{align*}
V - \ln \frac{x^{\alpha}}{n} & = V - \ln \frac{n^{1+\alpha\epsilon}}{n}\\
& = V - \ln n^{\alpha\epsilon}\\
& = V-\alpha\epsilon\ln n.
\end{align*}

When $\alpha>2$, then $x=n^{1/2+\epsilon}$, and
\begin{align*}
V - \ln \frac{x^{\alpha}}{n} & = V - \ln \frac{n^{\alpha/2+\alpha\epsilon}}{n}\\
& \leq V - \ln \frac{n^{1+\alpha\epsilon}}{n}\\
& = V - \ln n^{\alpha\epsilon}\\
& = V-\alpha\epsilon\ln n.
\end{align*}

Thus, after combining both cases, we obtain that $Vne^{\mu x}x^{-\alpha}-\mu x   \leq V - \alpha\epsilon\ln n$, and the lemma follows.

\end{proof}

Next, after collecting results of Lemmas~\ref{lemma:P1}, and~\ref{lemma:P4}, we obtain from~\eqref{eq:K-tmp} that
\begin{align*}
\mathcal{K}(\mu,x) \leq \exp\Big(n\mathcal{T}(\alpha,\,\mu) + \left(Vne^{\mu x}x^{-\alpha}-\mu x \right)\Big) \leq \exp\Big(o(1)+ V-\alpha\epsilon\ln n\Big) \leq e^{2V}n^{-\alpha\epsilon},
\end{align*}
and, therefore,~\eqref{eq:K} simplifies to
\begin{align*}
\Pr[S_{n} \geq \E S_n + x] & \leq nVx^{-\alpha} + \mathcal{K}(\mu,x)\\
& \leq nVx^{-\alpha} + \exp\Big(2V - \alpha\epsilon\ln n\Big) \\
& = nVx^{-\alpha} + e^{2V}n^{-\alpha\epsilon}.
\end{align*} 
And finally, since $x=n^{\max(1/\alpha, \, 1/2)+ \epsilon}$, we obtain that
\begin{align*}
\Pr[S_{n} \geq \E S_n + x] & = \Pr[S_{n} - \E S_n \geq n^{\max(1/\alpha, \, 1/2)+ \epsilon}] \\
& \leq nVx^{-\alpha} + e^{2V}n^{-\alpha\epsilon}\\
& = Vn^{1-\max(1, \, \alpha/2)-\alpha\epsilon} + e^{2V}n^{-\alpha\epsilon},
\end{align*} 
 which proves Theorem~\ref{thr:bound_3}.
\end{proof}

Theorem~\ref{thr:bound_3} implies a simple corollary:
\begin{corollary}
\label{cor:right-tail}
Let $S_n=\sum_{i=1}^{n}X_i$, where $X_i \sim \mathbb{D}\left(\alpha,\alpha \right)$ are independent not necessary identically distributed integer-valued random variables, with $\alpha > 1$. Then w.h.p. 
$$
S_n - \E S_n \leq Cn^{\max(1/\alpha,1/2)},
$$
where  $C>0$ is some constant.
\end{corollary}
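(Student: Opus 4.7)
The statement is essentially an immediate packaging of Theorem~\ref{thr:bound_3} into a w.h.p.\ form, so my plan is to just invoke that theorem and verify that the probability bound vanishes.

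First I would fix an arbitrary $\epsilon>0$ and apply Theorem~\ref{thr:bound_3} to the sum $S_n$, yielding
\[
\Pr\bigl[S_n-\E S_n \geq n^{\max(1/\alpha,\,1/2)+\epsilon}\bigr] \;\leq\; V\,n^{1-\max(1,\,\alpha/2)-\alpha\epsilon} \;+\; e^{2V}\,n^{-\alpha\epsilon}.
\]
The next step is to check that the right-hand side is $o(1)$. Since $\alpha>1$, we have $\max(1,\alpha/2)\geq 1$, so the exponent $1-\max(1,\alpha/2)-\alpha\epsilon \leq -\alpha\epsilon < 0$, and of course $-\alpha\epsilon<0$ for the second term. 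Both terms therefore tend to $0$ as $n\to\infty$, so the complementary event
\[
\bigl\{S_n-\E S_n < n^{\max(1/\alpha,\,1/2)+\epsilon}\bigr\}
\]
holds w.h.p.

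To match the stated form $S_n-\E S_n \leq C\,n^{\max(1/\alpha,1/2)}$, I would follow the same convention used in the corollary of Theorem~\ref{thr:bound_1}: the small exponent $n^{\epsilon}$ produced by Theorem~\ref{thr:bound_3} is absorbed into the constant prefactor $C$ (formally, for every $\epsilon>0$ one obtains a $C=C(\epsilon)$ giving the desired polynomial bound up to an arbitrarily small polynomial slack). The main ``obstacle'' is purely cosmetic, namely reconciling the $n^{\epsilon}$ slack in Theorem~\ref{thr:bound_3} with the constant $C$ in the corollary; there is no new probabilistic content to prove.
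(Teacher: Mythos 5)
Your reduction to Theorem~\ref{thr:bound_3} is the natural one (and matches the paper, which states the corollary without a written proof), and you correctly verify that the right-hand side is $o(1)$ since $1-\max(1,\alpha/2)-\alpha\epsilon\le -\alpha\epsilon<0$. The gap is the final step: $n^{\epsilon}$ is not a constant and cannot be ``absorbed'' into $C$. For any fixed $C>0$ and $\epsilon>0$, eventually $Cn^{\max(1/\alpha,1/2)} < n^{\max(1/\alpha,1/2)+\epsilon}$, so the event $\{S_n-\E S_n\le Cn^{\max(1/\alpha,1/2)}\}$ is eventually a \emph{subset} of the event Theorem~\ref{thr:bound_3} controls, not a superset; knowing the larger event holds w.h.p.\ tells you nothing about the smaller one. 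The implication runs the wrong way, so this is not cosmetic.

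Indeed, a hard-constant version of the corollary cannot be recovered from the proof of Theorem~\ref{thr:bound_3}: setting $x=Cn^{\max(1/\alpha,1/2)}$ makes the union-bound term $nVx^{-\alpha}=VC^{-\alpha}n^{1-\max(1,\alpha/2)}$ equal to the constant $VC^{-\alpha}$ when $1<\alpha\le 2$, which does not vanish, and $\mu x=\ln(x^\alpha/n)=\alpha\ln C$ stops tending to infinity, breaking later steps. More fundamentally, for genuinely heavy-tailed $X_i$ the stable (or Gaussian) limit law makes $\Pr[S_n-\E S_n> Cn^{\max(1/\alpha,1/2)}]$ converge to a strictly positive constant depending on $C$, so the probability cannot tend to $1$ for any fixed $C$. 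The correct conclusion from Theorem~\ref{thr:bound_3} is the weaker one: for every fixed $\epsilon>0$, $S_n-\E S_n\le n^{\max(1/\alpha,1/2)+\epsilon}$ w.h.p. You should state the corollary in that form (as the paper arguably should as well) rather than assert that the $n^\epsilon$ slack can be folded into the constant.
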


To find the left tail bounds of the r.v. $S_n-\E S_n$, we will apply the method we used while proving Theorem~\ref{thr:bound_2}, i.e. we  introduce ''inverted`` random variables to which we  apply the right-tail bound from Theorem~\ref{thr:bound_3}:

\begin{theorem}
	\label{thr:bound_4}
	Let $S_n=\sum_{i=1}^{n}X_i$, where $X_i \sim \mathbb{D}\left(\alpha_{l,i},\alpha_{r,i} \right)$ are independent not necessary identically distributed integer-valued random variables with $\alpha_{l,i},\alpha_{r,i} > 1$. Then for any $\epsilon > 0$, we have
	$$
	\Pr\left[S_n-\E S_n \leq -n^{\max(1/\alpha,\,1/2) + \epsilon}\right]  \leq Wn^{1-\max(1,\,\alpha/2) - \alpha\epsilon} + e^{2W}n^{-\alpha\epsilon},
	$$
	when $n \rightarrow \infty$.
\end{theorem}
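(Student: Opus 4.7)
The plan is to reduce this statement to Theorem~\ref{thr:bound_3} by the reflection trick already used in the proof of Theorem~\ref{thr:bound_2}. Specifically, I would introduce the inverted variables $X_i' \overset{d}{=} -X_i$. Since $\Pr[X_i' \geq k] = \Pr[X_i \leq -k] = F_{X_i-}(k)$ and $\Pr[X_i' \leq -k] = \Pr[X_i \geq k] = F_{X_i+}(k)$, the left and right tails of $X_i$ are swapped for $X_i'$. Hence $X_i' \sim \mathbb{D}(\alpha_{r,i}, \alpha_{l,i})$, with right-tail majorizing constant $W$ (the left-tail constant of $X_i$) and left-tail constant $V$. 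The quantity $\alpha = \min(\alpha_l, \alpha_r)$ is symmetric in the two tails, so it is unchanged after inversion.

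Next, I would set $S_n' := \sum_{i=1}^n X_i' = -S_n$. Both sides have finite expectation by Corollary~1 following Lemma~\ref{lemma:generalized-expectation}, so by linearity $\E S_n' = -\E S_n$. Consequently the events
\[
\{S_n - \E S_n \leq -x\} \quad \text{and} \quad \{S_n' - \E S_n' \geq x\}
\]
coincide exactly, for any $x > 0$.

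I would then apply Theorem~\ref{thr:bound_3} to the sum $S_n'$, taking $x = n^{\max(1/\alpha,\,1/2)+\epsilon}$. The theorem requires that both the left- and right-tail exponents of each summand exceed $1$, which holds for the $X_i'$ because $\alpha_{l,i}, \alpha_{r,i} > 1$ by assumption. The right-tail majorization constant of each $X_i'$ is $W$, so in Theorem~\ref{thr:bound_3} the parameter $V$ is replaced by $W$, yielding
\[
\Pr\bigl[S_n' - \E S_n' \geq n^{\max(1/\alpha,\,1/2)+\epsilon}\bigr] \leq W n^{1 - \max(1,\,\alpha/2) - \alpha\epsilon} + e^{2W} n^{-\alpha\epsilon}.
\]
Rewriting the left-hand side as $\Pr[S_n - \E S_n \leq -n^{\max(1/\alpha,\,1/2)+\epsilon}]$ gives exactly the claim.

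This proof is essentially bookkeeping, so I do not expect any real obstacle. The only point to double-check is that the tail-class conventions match: one must verify that inversion sends $\mathbb{D}(\alpha_l,\alpha_r)$ with constants $(W,V)$ to $\mathbb{D}(\alpha_r,\alpha_l)$ with constants $(V,W)$, and that the minimum exponent $\alpha$ used in Theorem~\ref{thr:bound_3} coincides with the $\alpha$ in the present statement. Both are immediate from the definitions in Definition~\ref{def:D} and the symmetry of $\min$.
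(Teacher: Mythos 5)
Your proposal is correct and follows essentially the same route as the paper: invert each variable via $X_i' \overset{d}{=} -X_i$, observe that the tails (and their majorizing constants $V,W$) swap while $\alpha = \min(\alpha_l,\alpha_r)$ is preserved, use Corollary~\ref{lemma-expectation} to guarantee $\E S_n' = -\E S_n$, rewrite the event $\{S_n-\E S_n \le -x\}$ as $\{S_n'-\E S_n' \ge x\}$, and invoke Theorem~\ref{thr:bound_3} with $V$ replaced by $W$. This matches the paper's argument step for step.
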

\begin{proof}
	 Let's introduce random variables $X_i^{'}$ that have the same distributions as $-X_i$, i.e. $X_i^{'} \overset{d}{=} -X_i$. Clearly, $X_i^{'} \sim \mathbb{D}\left( \alpha_{r,i}, \alpha_{l,i}\right)$ with $\alpha_{l,i}, \alpha_{r,i} > 1$, and so, by Definition~\ref{def:D}, $F_{X_i'+}(k)\leq W_{X_i}k^{-\alpha_{l,i}}$. 
	 
	 Since $\alpha_{l,i}, \alpha_{r,i} > 1$, from Lemma~\ref{lemma-expectation} it follows that $\left|\E X_i^{'}\right| < \infty$. Then
	\begin{align*}
	\Pr\left[S_n -\E S_n\leq -x\right] & = \Pr\left[ \sum_{i=1}^{n}\left(X_i-\E X_i\right) \leq -x\right] \\
	& = \Pr\left[ \sum_{i=1}^{n}\left(\E X_i - X_i\right) \geq x\right]\\
	& = \Pr\left[ \sum_{i=1}^{n}\Big(-X_i - \E\left[-X_i\right]\Big) \geq x\right]\\
	& = \Pr\left[ \sum_{i=1}^{n}\Big(X_i^{'}-\E X_i^{'}\Big) \geq x\right]\\
	& = \Pr\left[ \sum_{i=1}^{n}X_i^{'}-\sum_{i=1}^{n}\E X_i^{'} \geq x\right]\\
	& = \Pr\left[ S_n^{'} -\E S_n^{'}\geq x\right], \text{ where } S_n^{'}:=\sum_{i=1}^{n}X_i^{'}\\
	& \leq nWx^{-\alpha} + e^{2W}n^{-\alpha\epsilon}.
	\end{align*}
	 The last inequality here follows after applying Theorem~\ref{thr:bound_3} to the sum $S_n^{'}-\E S_n^{'}$,  which consists of random variables each of which has  tail  distribution functions that can be bounded by some $Ck^{-\alpha}$ with constants $C>0$ and $\alpha>1$.
	 
	 Finally, since $x=n^{\max(1/\alpha, \, 1/2)+\epsilon}$, we obtain that
\begin{align*}
\Pr\left[S_n -\E S_n\leq -x\right] & = \Pr\left[S_n -\E S_n\leq -n^{\max(1/\alpha, \, 1/2)+\epsilon}\right] \leq Wn^{1-\max(1, \, \alpha/2)-\alpha\epsilon} +  + e^{2W}n^{-\alpha\epsilon},
\end{align*}
	 and the theorem follows.
	 
\end{proof}

As a result from Theorem~\ref{thr:bound_4} and Corollary~\ref{cor:right-tail} another useful corollary follows:
\begin{corollary}
	\label{cor:concentration}
	Let $S_n=\sum_{i=1}^{n}X_i$, where $X_i \sim \mathbb{D}\left(\alpha,\,\alpha \right)$ are independent not necessary identically distributed integer-valued random variables, with $\alpha > 1$. Then w.h.p. 
	$$
	|S_n - \E S_n| \leq Cn^{\max(1/\alpha,\:1/2)},
	$$
	where $C>0$ is some constant.
\end{corollary}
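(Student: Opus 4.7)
The plan is to obtain this two-sided concentration bound as an immediate consequence of the one-sided tail estimates already proved, namely Theorem~\ref{thr:bound_3} (right tail) and Theorem~\ref{thr:bound_4} (left tail), combined by a union bound. Since the hypothesis $X_i \sim \mathbb{D}(\alpha,\alpha)$ with $\alpha>1$ applies to both tails, both theorems can be invoked simultaneously with the same exponent $\alpha$ and the same threshold $n^{\max(1/\alpha,\,1/2)+\epsilon}$.

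Concretely, I would fix an arbitrary $\epsilon>0$ and combine the two tail bounds to get
\[
\Pr\!\left[\,|S_n-\E S_n|\ge n^{\max(1/\alpha,1/2)+\epsilon}\right]
\;\le\;(V+W)\,n^{1-\max(1,\alpha/2)-\alpha\epsilon}+(e^{2V}+e^{2W})\,n^{-\alpha\epsilon}.
\]
Since $\alpha>1$ we have $1-\max(1,\alpha/2)\le 0$, and $\alpha\epsilon>0$, so each term tends to $0$ as $n\to\infty$. This gives $\Pr[\,|S_n-\E S_n|\ge n^{\max(1/\alpha,1/2)+\epsilon}]\to 0$, i.e.\ $|S_n-\E S_n|\le n^{\max(1/\alpha,1/2)+\epsilon}$ w.h.p.

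To match the precise form stated in the corollary (a constant factor $C$ rather than an $n^{\epsilon}$ factor), I would note that the same $C n^{\max(1/\alpha,1/2)}$ style statement was already used for the one-sided version in Corollary~\ref{cor:right-tail}, and apply the same convention here: choosing any $\epsilon>0$ small, the quantity $n^{\epsilon}$ may be absorbed into a suitable constant $C$ under the w.h.p.\ statement, exactly as in the prior corollary. No new estimates are needed.

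There is essentially no technical obstacle: the heavy lifting is all done in Theorems~\ref{thr:bound_3} and~\ref{thr:bound_4}. The only minor point to be careful about is ensuring that the hypotheses of both theorems are satisfied at once under $\mathbb{D}(\alpha,\alpha)$ (which they clearly are, with the same $\alpha$ playing the role of both $\alpha_r$ and $\alpha_\ell$), and that the union bound preserves the w.h.p.\ conclusion, which it does since the sum of two vanishing probabilities still vanishes.
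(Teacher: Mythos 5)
Your proposal matches the paper's approach exactly: it combines the one-sided tail bounds of Theorems~\ref{thr:bound_3} and~\ref{thr:bound_4} by a union bound, observing that both error terms vanish since $\alpha>1$. The minor imprecision you flag (that an $n^{\epsilon}$ factor cannot literally be absorbed into a fixed constant $C$) is inherited from the paper's own Corollary~\ref{cor:right-tail} and is not something your argument introduced.
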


Hence, when $S_n=\sum_{i=1}^{n}X_i$ consists of variables, whose tail functions can be bounded by function $C \, k^{-\alpha}$ with $\alpha>1$ and some constant $C>0$, then we do not expect $S_n$ to deviate much from its expected value $\E S_n$. 

\bibliographystyle{plain}
\bibliography{sums} 

\end{document}